\documentclass[12pt,twoside]{amsart}
\usepackage{latexsym}
\usepackage{amssymb}
\usepackage{amscd}
\usepackage{amsmath}
\usepackage{amsthm}
\usepackage{enumerate} 
\usepackage{mathrsfs}  
\usepackage{hyperref}
\usepackage[all]{xy}

\title[On log canonical rings]{On log canonical rings} 
\author{Osamu Fujino}
\author{Yoshinori Gongyo}
\date{2013/7/12, version 1.06}
\subjclass[2010]{14E30}
\keywords{log canonical rings, abundance conjecture, minimal models, 
non-vanishing conjecture}
\address{Department of Mathematics, Faculty of Science,
Kyoto University, Kyoto 606-8502, Japan}
\email{fujino@math.kyoto-u.ac.jp}
\address{Graduate School of Mathematical Sciences, 
The University of Tokyo, 3-8-1 Komaba, 
Meguro, Tokyo, 153-8914 Japan.}
\email{gongyo@ms.u-tokyo.ac.jp}
\address{Department of Mathematics, Imperial College London, 
180 Queen's Gate, London SW7 2AZ, UK}
\email{y.gongyo@imperial.ac.uk}

\dedicatory{Dedicated to Professor~Yujiro~Kawamata on the~occasion of his~sixtieth~birthday.}
\newcommand{\Supp}[0]{\operatorname{Supp}}
\newcommand{\Proj}[0]{\operatorname{Proj}}
\newcommand{\Exc}[0]{\operatorname{Exc}}

\newtheorem{thm}{Theorem}[section]
\newtheorem{lem}[thm]{Lemma}
\newtheorem{cor}[thm]{Corollary}

\theoremstyle{definition}

\newtheorem{defn}[thm]{Definition}
\newtheorem{rem}[thm]{Remark}
\newtheorem*{ack}{Acknowledgments}

\newtheorem{say}[thm]{}

\newtheorem{theorema}{Conjecture}

\begin{document}
\bibliographystyle{amsalpha+}

\maketitle

\begin{abstract}
We discuss the  relationship among various conjectures in the minimal 
model theory including the finite generation conjecture of the log canonical rings 
and the abundance conjecture. In particular, we  
show that the finite generation conjecture of the log canonical rings for log 
canonical pairs can be reduced to that of the log canonical rings for purely log terminal 
pairs of log general type.
\end{abstract}

\tableofcontents
\section{Introduction} In this article, 
we discuss the relationship among the following conjectures:

\begin{theorema}\label{conj F} Let $(X,\Delta)$ be a projective log canonical pair 
and $\Delta$ a $\mathbb{Q}$-divisor. Then the log canonical ring 
$$R(X, \Delta):= 
\bigoplus_{m\geq 0}H^0(X, \mathcal{O}_X(\lfloor m(K_X+\Delta ) \rfloor ))$$
 is finitely generated.
\end{theorema}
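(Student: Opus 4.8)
The plan is to reduce the finite generation of $R(X,\Delta)$ for an arbitrary projective log canonical pair to the same statement for an essentially minimal class of pairs, and then to isolate the one case that genuinely resists current methods. First I would replace $(X,\Delta)$ by a crepant $\mathbb{Q}$-factorial dlt pair: taking a dlt blow-up $f\colon(Y,\Delta_Y)\to(X,\Delta)$ with $K_Y+\Delta_Y=f^{*}(K_X+\Delta)$, $\Delta_Y$ effective and $(Y,\Delta_Y)$ $\mathbb{Q}$-factorial dlt, one checks that $H^{0}(Y,\lfloor m(K_Y+\Delta_Y)\rfloor)=H^{0}(X,\lfloor m(K_X+\Delta)\rfloor)$ inside $\mathbb{C}(X)$ for all $m\ge 0$ (pullback and pushforward of effective divisors are effective, and the rounding is controlled by integrality of $\operatorname{div}(g)$), so $R(Y,\Delta_Y)\cong R(X,\Delta)$ and it is enough to treat $\mathbb{Q}$-factorial dlt pairs.

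Next I would stratify by the Iitaka dimension $\kappa:=\kappa(X,K_X+\Delta)$. If $\kappa=-\infty$ then $R(X,\Delta)=\mathbb{C}$, and if $\kappa=0$ the ring is finitely generated for the elementary reason that every positive-degree graded piece has dimension at most one and the semigroup of $m$ with $H^{0}(X,\lfloor m(K_X+\Delta)\rfloor)\ne 0$ is finitely generated. If $1\le\kappa<\dim X$, I would pass to the Iitaka fibration and apply the canonical bundle formula for lc-trivial fibrations (Ambro, Fujino--Gongyo): after a birational modification, $K_X+\Delta$ becomes $\mathbb{Q}$-linearly equivalent to the pullback of $K_Z+B_Z+\mathbf{M}_Z$ on a base $Z$ with $\dim Z=\kappa$, where $(Z,B_Z)$ is log canonical, $\mathbf{M}_Z$ is the nef moduli part, and $K_Z+B_Z+\mathbf{M}_Z$ is big. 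Since $R(X,\Delta)$ is then isomorphic to the generalized log canonical ring of $(Z,B_Z+\mathbf{M}_Z)$, one is reduced, by induction on dimension and after phrasing the conjecture for generalized polarized pairs, to the case where $(X,\Delta)$ is $\mathbb{Q}$-factorial dlt \emph{of log general type}.

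So assume now that $(X,\Delta)$ is $\mathbb{Q}$-factorial dlt with $K_X+\Delta$ big. If $\lfloor\Delta\rfloor=0$ the pair is klt and $K_X+\Delta$ is big, so $R(X,\Delta)$ is finitely generated by Birkar--Cascini--Hacon--McKernan: a minimal model exists, $K_X+\Delta$ is nef and big there, hence semiample. If $\lfloor\Delta\rfloor\ne 0$, pick a component $S$ of $\lfloor\Delta\rfloor$ and use the restriction exact sequences
\[
0\to\mathcal{O}_X(\lfloor m(K_X+\Delta)\rfloor-S)\to\mathcal{O}_X(\lfloor m(K_X+\Delta)\rfloor)\to\mathcal{O}_S(\lfloor m(K_X+\Delta)\rfloor|_S)\to 0
\]
together with adjunction $(K_X+\Delta)|_S=K_S+\Delta_S$, where $(S,\Delta_S)$ is dlt of log general type. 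Here the right object to work with is not a single section ring but an adjoint ring over a polytope of boundaries, so that restriction to $S$ is well behaved (in the spirit of Shokurov, Hacon--McKernan and Corti--Lazi\'{c}); finite generation of the restricted adjoint algebra on $S$ then follows from the conjecture in dimension $\dim X-1$, and propagating this back through the polytope lets one peel off the components of $\lfloor\Delta\rfloor$ one at a time until $\lfloor\Delta\rfloor$ is a prime divisor, i.e.\ until the pair is \emph{plt of log general type}.

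The main obstacle is exactly this irreducible core. For a plt pair $(X,S+B)$ of log general type, finite generation of $R(X,S+B)$ would follow from the existence of a minimal model, since then $K_X+S+B$ is nef and big and hence semiample by the base point free theorem for log canonical pairs (Fujino); but the existence of such minimal models is not known, because, roughly, via adjunction to $S$ it requires running and terminating a log minimal model program for klt pairs in dimension $\dim X-1$ whose boundary need not be big, and termination of the klt minimal model program without a bigness hypothesis is open. Replacing $K_X+S+B$ by the klt, big divisor $K_X+(1-\varepsilon)S+B$ brings Birkar--Cascini--Hacon--McKernan into play but changes the ring, and finite generation can fail to survive the limit $\varepsilon\to 0$; this lack of semicontinuity is what makes the boundary case genuinely hard. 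A secondary, technical difficulty is setting up the generalized-pair formulation and the log canonical adjoint-algebra machinery carefully enough that the two inductions---on dimension and on the number of components of $\lfloor\Delta\rfloor$---actually close. The upshot of the reductions above is that Conjecture~\ref{conj F} for log canonical pairs is equivalent to Conjecture~\ref{conj F} for purely log terminal pairs of log general type.
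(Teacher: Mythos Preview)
First, note that the statement you were given is \emph{Conjecture}~A in the paper; it is not proved there. What the paper does prove is the Main Theorem: Conjecture~A$_n$, Conjecture~B$_n$ (the plt big case), and Conjecture~C$_{\le n-1}$ (good minimal models in lower dimension) are all equivalent. Your proposal is really an attempt at the reduction B$_n\Rightarrow$A$_n$ (Corollary~1.2), and you correctly isolate the plt big case as the open core; but your route to that reduction has genuine gaps, and it is entirely different from the paper's.

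The most serious gap is your treatment of $1\le\kappa<\dim X$ via the Iitaka fibration and the canonical bundle formula. The paper explicitly says (introduction and Remark~3.9) that this Fujino--Mori style reduction \emph{does not} go through for log canonical pairs: the perturbation that absorbs the moduli part into a klt boundary fails once there are log canonical centres not dominant over the base. You acknowledge this by passing to generalized polarized pairs, but then your inductive hypothesis becomes finite generation for generalized lc pairs, which is strictly stronger than A, and the induction no longer closes on the ordinary Conjecture~B; so you do not actually recover ``A$_n\Leftrightarrow$B$_n$''. Your ``peel off components via adjoint rings'' step in the big dlt case is also problematic: the adjoint-ring machinery of Hacon--M\textsuperscript{c}Kernan and Corti--Lazi\'c is built for klt pairs, and your appeal to ``the conjecture in dimension $\dim X-1$'' on $S$ is an appeal to A$_{n-1}$ for dlt pairs, which you have not yet deduced from B$_n$ at that point in the argument.

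The paper avoids all of this by going through Conjecture~C$_{\le n-1}$. The key idea (Lemma~3.4) is a cone construction: for a smooth $(n{-}1)$-fold $X$ with $K_X$ pseudo-effective, the $\mathbb P^1$-bundle $Y=\mathbb P_X(\mathcal O\oplus\mathcal O(-1))$ carries a plt pair $(Y,E+f^*H)$ of log general type with $E\simeq X$, so Conjecture~B$_n$ gives a good minimal model of $Y$; since $E$ is not uniruled it survives the MMP, and one reads off $\kappa(K_X)\ge 0$. From this non-vanishing one bootstraps abundance and non-vanishing in dimension $\le n-1$ (Lemmas~3.3 and 3.5), gets C$_{\le n-1}$ via Birkar (Lemma~3.6), and then deduces A$_n$ for $\kappa\ge 1$ by running the MMP on $(X,\Delta)$ itself and applying Fukuda's abundance criterion (Lemma~3.7). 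No canonical bundle formula on the Iitaka base and no adjoint-ring polytope arguments are needed.
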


\begin{theorema}\label{conj F_big}Let $(X,\Delta)$ be a projective purely 
log terminal pair such that $\lfloor \Delta \rfloor$ is irreducible and that 
$\Delta$ is a $\mathbb{Q}$-divisor. 
Suppose that $K_X+\Delta$ is big. Then the log canonical ring 
$$R(X, \Delta)= 
\bigoplus_{m\geq 0}H^0(X, \mathcal{O}_X(\lfloor m(K_X+\Delta ) \rfloor ))$$
 is finitely generated.
\end{theorema}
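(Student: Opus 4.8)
The plan is to deduce the statement from the minimal model program for pairs of log general type, ultimately from the theorem of Birkar--Cascini--Hacon--McKernan, the only real subtlety being the presence of the reduced boundary divisor. Write $S=\lfloor\Delta\rfloor$, which by hypothesis is irreducible, and $B=\{\Delta\}$, so that $\Delta=S+B$ with $\lfloor B\rfloor=0$. First I would reduce to the case that $X$ is $\mathbb{Q}$-factorial: choosing a small $\mathbb{Q}$-factorialization $f\colon Y\to X$ (which exists since $(X,\Delta)$ is dlt) and setting $\Delta_Y=f^{-1}_{*}\Delta$, the pair $(Y,\Delta_Y)$ is again $\mathbb{Q}$-factorial plt, $\lfloor\Delta_Y\rfloor=f^{-1}_{*}S$ is irreducible, $K_Y+\Delta_Y=f^{*}(K_X+\Delta)$ is big, and, because $f$ is small, $R(Y,\Delta_Y)\cong R(X,\Delta)$; so we may assume $X$ is $\mathbb{Q}$-factorial.

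Next I would run a $(K_X+\Delta)$-minimal model program with scaling of an ample divisor. Granting that this program can be run and that it terminates --- which is the heart of the matter, addressed below --- it ends with a birational contraction $\phi\colon X\dashrightarrow X'$ onto a $\mathbb{Q}$-factorial dlt pair $(X',\Delta')$, where $\Delta'=\phi_{*}\Delta$ and $K_{X'}+\Delta'$ is nef; since bigness of $K_X+\Delta$ is preserved under the steps of the program, $K_{X'}+\Delta'$ is nef and big. By the base-point-free theorem for dlt pairs, the nef and big divisor $K_{X'}+\Delta'$ is then semiample, so $R(X',\Delta')$ is finitely generated. Finally, since $\phi$ is a birational contraction and $K_{X'}+\Delta'=\phi_{*}(K_X+\Delta)$, the natural maps $H^0(X,\mathcal{O}_X(\lfloor m(K_X+\Delta)\rfloor))\to H^0(X',\mathcal{O}_{X'}(\lfloor m(K_{X'}+\Delta')\rfloor))$ are isomorphisms for every $m\ge0$, and hence $R(X,\Delta)\cong R(X',\Delta')$ is finitely generated.

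The main obstacle is thus to run and terminate the $(K_X+\Delta)$-MMP for the pair $(X,\Delta)$, which is dlt but \emph{not} klt because of the reduced component $S$, so that Birkar--Cascini--Hacon--McKernan does not apply verbatim. I would reduce to their klt theorem in two steps: first replace $(X,\Delta)$, in a neighbourhood of $S$, by klt pairs whose log canonical class remains big --- using that the big cone is open, so bigness survives a sufficiently small perturbation of the boundary --- and then invoke special termination, which reduces termination of the program near $S$ to the minimal model program in dimension $\dim X-1$, where one argues by induction on the dimension. An alternative, also inductive on $\dim X$, is to restrict to $S$: by adjunction $K_S+\Delta_S=(K_X+\Delta)|_S$ with $(S,\Delta_S)$ klt of log general type, so that $R(S,\Delta_S)$ is finitely generated, and one then uses a Hacon--McKernan-type extension theorem for pluri-log-canonical sections to compare $R(X,\Delta)$ with its image in $R(S,\Delta_S)$ and bootstrap finite generation of $R(X,\Delta)$. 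In either route the genuinely delicate point is the treatment of the reduced divisor $S$, which is precisely what makes isolating the purely log terminal, log-general-type case the right reduction.
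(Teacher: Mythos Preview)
The statement you are attempting to prove is not a theorem in the paper: it is \emph{Conjecture~B}, listed among several open conjectures whose mutual implications the paper studies. The paper does not prove it; its main theorem shows that Conjecture~B$_n$ is \emph{equivalent} to Conjecture~A$_n$ (finite generation for all lc pairs) and to Conjecture~C$_{\leq n-1}$ (existence of good minimal models in dimension $\leq n-1$), and the introduction notes that these are open for $n\geq 5$. There is thus no ``paper's own proof'' to compare against; what you have written is an attempted unconditional proof of an open problem.

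Your argument has a genuine gap at the termination step. Special termination does reduce termination of the $(K_X+\Delta)$-MMP near $S=\lfloor\Delta\rfloor$ to the minimal model program for lower-dimensional dlt pairs, but that lower-dimensional input---Conjecture~C$_{n-1}$ in the paper's language---is itself open, and your induction does not supply it: assuming Conjecture~B$_{\leq n-1}$ gives, via the paper's main theorem, only Conjecture~C$_{\leq n-2}$, one dimension short of what special termination needs. Even if one grants that flips eventually avoid $S$, one must still terminate the remaining sequence for a pair that is globally only dlt; the klt perturbation $(X,(1-\varepsilon)S+B)$ is indeed big, but its MMP is a different program, and a minimal model for it need not be a minimal model for $(X,\Delta)$. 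Your alternative route via restriction to $S$ runs into the same wall: $(K_X+\Delta)|_S$ need not be big, and an extension theorem, even when available, only compares $R(X,\Delta)$ with its image in $R(S,\Delta_S)$ without controlling the kernel. In effect both routes rediscover exactly the circle of implications the paper makes precise, rather than breaking out of it.
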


\begin{theorema}[Good minimal model conjecture]\label{conj M} 
Let $(X,\Delta)$ be a $\mathbb{Q}$-factorial projective 
divisorial log terminal pair and $\Delta$ an $\mathbb{R}$-divisor. 
If $K_X+\Delta$ is pseudo-effective, then $(X, \Delta)$ has a good minimal model. 
\end{theorema}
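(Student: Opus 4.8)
The plan is to reduce Conjecture~\ref{conj M}, by induction on $n=\dim X$, to two conjectural inputs in dimension $n$ --- the non-vanishing conjecture and the abundance conjecture --- bridged by the minimal model program with scaling, assuming Conjecture~\ref{conj M} in all lower dimensions.

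\textbf{Reductions.} Having a good minimal model is insensitive to small $\mathbb{Q}$-factorial crepant modifications, and every log canonical pair admits a $\mathbb{Q}$-factorial dlt blow-up, so the class in the statement is the natural one; moreover, by the standard perturbation argument on the rational polytope of boundaries with fixed support one reduces the $\mathbb{R}$-divisor case to the case in which $\Delta$ is a $\mathbb{Q}$-divisor. Thus we may take $(X,\Delta)$ to be $\mathbb{Q}$-factorial dlt with $\Delta$ a $\mathbb{Q}$-divisor and $K_X+\Delta$ pseudo-effective, and the task is to produce a birational model on which $K_X+\Delta$ becomes semiample.

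\textbf{Existence of a minimal model.} Apply the non-vanishing conjecture to obtain an effective $\mathbb{Q}$-divisor $D\sim_{\mathbb{Q}}K_X+\Delta$, and run a $(K_X+\Delta)$-MMP with scaling of an ample divisor; the required extremal contractions and flips exist for $\mathbb{Q}$-factorial dlt pairs, and since $K_X+\Delta$ is pseudo-effective the program does not terminate with a Mori fibre space. Special termination reduces termination of this program to the log minimal model program in dimension $<n$; combining this with the inductive hypothesis and the effective divisor $D$, the standard argument that non-vanishing plus the existence of minimal models in lower dimension implies their existence in dimension $n$ yields a model $(X',\Delta')$ on which $K_{X'}+\Delta'$ is nef.

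\textbf{The minimal model is good.} This is the crux and the step I expect to be the real obstacle. Write $\kappa_\sigma$ for the numerical dimension of $K_{X'}+\Delta'$. If $\kappa_\sigma=0$, then $K_{X'}+\Delta'\sim_{\mathbb{Q}}0$ by the abundance theorem for numerically trivial log canonical divisors, so $(X',\Delta')$ is already good. If $\kappa_\sigma=n$, then $K_{X'}+\Delta'$ is nef and big, hence semiample by the base-point-free theorem. The intermediate range $0<\kappa_\sigma<n$ is the essential content of abundance: after a log resolution one forms the Iitaka fibration attached to $K_{X'}+\Delta'$, applies the canonical bundle formula to descend to a klt pair of dimension $\kappa_\sigma<n$ whose log canonical divisor is big --- semiample by the previous cases --- and then propagates semiampleness upward, the delicate point being uniform control of the discriminant and moduli parts along the fibration, in the spirit of the treatments of abundance by Kawamata and by Fujino. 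I see no way around this: the step is precisely the abundance conjecture, and together with non-vanishing it is exactly what is missing. (Conversely, taking $\Proj$ of the log canonical ring of a good minimal model shows that Conjecture~\ref{conj M} implies Conjecture~\ref{conj F}; in each fixed dimension Conjecture~\ref{conj M} is in fact equivalent to the conjunction of the non-vanishing and abundance conjectures, which is the structural picture underlying this paper.)
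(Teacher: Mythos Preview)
The statement you are attempting to prove is not a theorem in the paper: it is Conjecture~\ref{conj M}, the good minimal model conjecture, and the paper offers no proof of it. The paper's content is the \emph{equivalence} of Conjectures~\ref{conj F}$_n$, \ref{conj F_big}$_n$, and \ref{conj M}$_{\leq n-1}$ (Theorem~\ref{main theorem}), not a proof of any of them unconditionally.

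Your proposal is therefore not a proof but a reduction, and you say as much yourself: you invoke the non-vanishing conjecture (Conjecture~\ref{conj N}) to produce the effective divisor $D$, and then the abundance conjecture (Conjecture~\ref{conj A}) to pass from nef to semiample. Both are open in the relevant dimensions, and the paper records in Remark~\ref{rem36} exactly the equivalence you arrive at in your final parenthetical: Conjecture~\ref{conj M}$_{\leq n}$ is equivalent to Conjecture~\ref{conj A}$_{\leq n}$ together with Conjecture~\ref{conj N}$_{\leq n}$, via \cite{bir-exiII} in one direction and dlt blow-ups in the other. So what you have written is a correct sketch of that known equivalence, not a proof of the conjecture; in particular the ``Existence of a minimal model'' and ``The minimal model is good'' steps each consume an open conjecture of the same strength as what you are trying to establish. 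There is nothing to compare against in the paper, because the paper does not purport to prove this statement.
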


From now on, Conjecture $\bullet _n$ 
(resp.~Conjecture $\bullet_{\leq n}$) stands for Conjecture $\bullet$  
with $\dim X =n$ (resp.~$\dim X \leq n$). Remark that 
in Conjectures \ref{conj F}, \ref{conj F_big}, and \ref{conj M}
we may assume that $(X,\Delta)$ is log smooth, i.e., 
$X$ is smooth and $\Delta$ has a 
simple normal crossing support by taking suitable resolutions. 

The following result is the main theorem:

\begin{thm}[Main Theorem]\label{main theorem} 
Conjectures \ref{conj F}$_{n}$, \ref{conj F_big}$_{n}$, 
and \ref{conj M}$_{\leq n-1}$ are all equivalent.
\end{thm}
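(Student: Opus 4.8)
The plan is to prove all three statements equivalent by going once around the cycle: from Conjecture~\ref{conj M}$_{\leq n-1}$ to Conjecture~\ref{conj F}$_n$, from \ref{conj F}$_n$ to \ref{conj F_big}$_n$, and from \ref{conj F_big}$_n$ back to \ref{conj M}$_{\leq n-1}$. The middle implication is immediate, since a purely log terminal pair is log canonical and the extra hypotheses in Conjecture~\ref{conj F_big} only shrink the class of pairs to which finite generation is asserted. The other two implications rest on two standard facts, used freely: a log canonical pair may be replaced, without changing its log canonical ring, by a $\mathbb{Q}$-factorial dlt (indeed log smooth) model via a crepant dlt blow-up; and the log canonical ring is invariant under the steps of a $(K_X+\Delta)$-minimal model program, a flip being an isomorphism in codimension one and a divisorial contraction removing a prime divisor contained in the stable base locus of $\lfloor m(K_X+\Delta)\rfloor$ for all $m$. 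In particular, if $(X,\Delta)$ has a good minimal model $(X',\Delta')$ then $K_{X'}+\Delta'$ is semiample, so $R(X,\Delta)\cong R(X',\Delta')$ is finitely generated.

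For \ref{conj M}$_{\leq n-1}\Rightarrow$\ref{conj F}$_n$, let $(X,\Delta)$ be log smooth of dimension $n$ with $\Delta$ a $\mathbb{Q}$-divisor. If $\kappa(X,K_X+\Delta)=-\infty$ then all positive-degree graded pieces of $R(X,\Delta)$ vanish and $R(X,\Delta)=\mathbb{C}$ is finitely generated. Otherwise $K_X+\Delta$ is pseudo-effective and non-vanishing holds, so by the inductive construction of good minimal models --- Conjecture~\ref{conj M}$_{\leq n-1}$ together with $\kappa\ge 0$ forces the existence of a good minimal model for a dlt pair of dimension $n$ (Birkar; Hacon--Xu; Fujino) --- the pair $(X,\Delta)$ has a good minimal model, and $R(X,\Delta)$ is finitely generated by the remark above. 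I expect this step to be routine modulo citing that inductive result.

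The implication \ref{conj F_big}$_n\Rightarrow$\ref{conj M}$_{\leq n-1}$ is the heart of the matter, and I would attack it in two stages, matching the reduction advertised in the abstract. Let $(X,\Delta)$ be log smooth of dimension $d\leq n-1$ with $K_X+\Delta$ pseudo-effective; reducing an $\mathbb{R}$-boundary to a $\mathbb{Q}$-boundary by a rational polytope argument and reducing the log canonical case to the klt case, we may take $\Delta$ to be a $\mathbb{Q}$-divisor and $(X,\Delta)$ klt. Stage one: prove $R(X,\Delta)$ finitely generated. When $K_X+\Delta$ is big this is known; in general one passes to the Iitaka fibration $f\colon X'\to Y$ of $K_X+\Delta$ and, via the Fujino--Mori canonical bundle formula $K_{X'}+\Delta'\sim_{\mathbb{Q}}f^{*}(K_Y+B_Y+M_Y)$, identifies $R(X,\Delta)$ up to truncation with the section ring of $K_Y+B_Y+M_Y$ on the $\kappa$-dimensional base $Y$. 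One then realizes this ring as a quotient of the log canonical ring of a big purely log terminal pair of dimension $n$ with $\lfloor\,\cdot\,\rfloor$ irreducible: after absorbing the moduli part $M_Y$ into a boundary, build a tower of fibrations with log general type fibres to raise the dimension to $n$ while preserving bigness, arrange an adjunction restriction to an irreducible boundary component recovering $K_X+\Delta$, and use a Hacon--McKernan-type extension theorem to obtain surjectivity of the relevant log canonical rings; several prime components of $\lfloor\,\cdot\,\rfloor$ are reduced to one by induction on their number using the restriction exact sequences. Applying Conjecture~\ref{conj F_big}$_n$ then gives finite generation of $R(X,\Delta)$. Stage two: run a $(K_X+\Delta)$-minimal model program with scaling and, combining finite generation with the Iitaka-fibration structure, deduce a good minimal model of $(X,\Delta)$. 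The main obstacle is precisely this package: making $M_Y$ well enough behaved (semiample) to be treated as a boundary, and upgrading ``finitely generated log canonical ring'' to ``good minimal model'' --- the latter being genuinely stronger --- without circularly invoking abundance in dimension $\le n-1$. This is where the bigness and purely log terminal hypotheses of Conjecture~\ref{conj F_big} must do real work, and I expect it to be the hardest part of the argument.
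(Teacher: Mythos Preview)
Your cycle and the implication \ref{conj M}$_{\leq n-1}\Rightarrow$\ref{conj F}$_n$ are essentially in line with the paper (modulo one slip: \ref{conj M}$_{\leq n-1}$ together with $\kappa\ge 0$ does \emph{not} give a good minimal model in dimension $n$; the paper's Lemma~\ref{lem7} needs $\kappa\ge 1$, and handles $\kappa\le 0$ for finite generation separately, which is trivial). The trivial implication \ref{conj F}$_n\Rightarrow$\ref{conj F_big}$_n$ is fine.

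The genuine gap is in \ref{conj F_big}$_n\Rightarrow$\ref{conj M}$_{\leq n-1}$. Your Stage~one begins by passing to the Iitaka fibration of $K_X+\Delta$, but this presupposes $\kappa(X,K_X+\Delta)\ge 0$; for a pseudo-effective $K_X+\Delta$ that is exactly the non-vanishing conjecture in dimension $\le n-1$, which is the hardest part of \ref{conj M}$_{\leq n-1}$ and cannot be assumed. Likewise, ``reducing the log canonical case to the klt case'' for the \emph{good minimal model} conjecture is not a formality, and your Stage~two correctly identifies that finite generation is strictly weaker than the existence of a good minimal model, but offers no mechanism to bridge the gap. So neither stage actually produces non-vanishing or abundance in dimension $\le n-1$ out of \ref{conj F_big}$_n$.

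The paper's route is quite different and avoids the Fujino--Mori reduction entirely for this implication. The key step (Lemma~\ref{lem4}) extracts non-vanishing for a smooth $(n-1)$-fold $X$ with $K_X$ pseudo-effective from \ref{conj F_big}$_n$ by a $\mathbb P^1$-bundle trick: form $Y=\mathbb P_X(\mathcal O_X\oplus\mathcal O_X(-1))$ with negative section $E\simeq X$, add an ample pullback to make $(Y,E+f^*H)$ a big plt pair with $\lfloor\Delta_Y\rfloor=E$, apply \ref{conj F_big}$_n$ to get a good minimal model $(Y',\Delta_{Y'})$, and observe via \cite{bdpp} that $E$ cannot be contracted since $K_E$ pseudo-effective forces $E$ non-uniruled; restricting a general member of $|m(K_{Y'}+\Delta_{Y'})|$ to the strict transform of $E$ then gives $\kappa(X,K_X)\ge 0$. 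From there the paper bootstraps by induction (Lemmas~\ref{lem3}, \ref{lem5}, \ref{lem6}), invoking \cite{dhp-ext}, \cite{g4}, \cite{hmx}, \cite{lazic}, and \cite{fg3} to pass from non-vanishing for smooth varieties to full non-vanishing and abundance in dimension $\le n-1$, and finally \cite{bir-exiII} to conclude \ref{conj M}$_{\leq n-1}$. The ``tower of fibrations'' and extension-theorem package you sketch does not substitute for this, because it never manufactures an effective divisor when only pseudo-effectivity is given.
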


We remark that 
Conjecture \ref{conj F_big}$_n$ implies  Conjecture \ref{conj F}$_{\leq n}$ 
by Theorem \ref{main theorem} because Conjecture \ref{conj F}$_{\leq n-1}$ 
directly follows from Conjecture \ref{conj M}$_{\leq n-1}$. 
We also remark that the equivalence of Conjecture \ref{conj F}$_n$ and 
Conjecture \ref{conj M}$_{\leq n-1}$ seems to be a folklore statement, 
though we have never seen the explicit statement in the literature. 

In \cite{fm}, the first author and Shigefumi Mori 
proved that the finite generation of the log canonical rings for projective klt pairs can be 
reduced to the case when the log canonical divisors are big by using the 
so-called Fujino--Mori canonical bundle formula 
(see \cite[Theorem 5.2]{fm}). 
This reduction seems to be indispensable 
for the finite generation of the log canonical 
rings for klt pairs (see, for example, \cite{bchm}, \cite{lazic}, and so on). 
Unfortunately, the reduction arguments in \cite{fm} can not be 
directly applied to log canonical pairs because 
the usual perturbation techniques do not work 
well for log canonical pairs (cf.~Remark \ref{fm-reduction}). The following statement is 
contained in our main theorem:~Theorem \ref{main theorem}. 

\begin{cor}\label{cor0}Conjecture \ref{conj F_big}$_n$ implies 
Conjecture \ref{conj F}$_n$. 
\end{cor}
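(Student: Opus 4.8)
The Corollary is subsumed by Theorem~\ref{main theorem}, which declares Conjectures~\ref{conj F}$_n$, \ref{conj F_big}$_n$ and \ref{conj M}$_{\leq n-1}$ equivalent; in particular Conjecture~\ref{conj F_big}$_n$ implies Conjecture~\ref{conj F}$_n$. Let me nonetheless indicate how I would prove the underlying implication from Conjecture~\ref{conj F_big}$_n$ to Conjecture~\ref{conj F}$_n$ directly, since that is where the content sits. The strategy is to peel an arbitrary projective log canonical pair down to the cases that are already at hand: the hypothesis of Conjecture~\ref{conj F_big}$_n$ itself, finite generation of canonical rings of klt pairs of log general type (\cite{bchm}), and finite generation in dimensions $<n$. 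The last of these is not a free input, being Conjecture~\ref{conj F}$_{\leq n-1}$, so the plan must route through Conjecture~\ref{conj M}$_{\leq n-1}$, which Theorem~\ref{main theorem} also derives from Conjecture~\ref{conj F_big}$_n$ and which supplies good minimal models, hence finite generation, in the lower-dimensional steps.

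So let $(X,\Delta)$ be projective log canonical with $\Delta$ a $\mathbb{Q}$-divisor; passing to a log resolution we may take $(X,\Delta)$ log smooth. I would split on the Iitaka dimension $\kappa=\kappa(X,K_X+\Delta)$. If $\kappa\le 0$ — in particular if $K_X+\Delta$ is not pseudo-effective — then every graded piece of $R(X,\Delta)$ has dimension at most one, the degrees carrying a nonzero section form a finitely generated submonoid of $\mathbb{Z}_{\ge 0}$, and $R(X,\Delta)$ is trivially finitely generated. If $0<\kappa<n$, I would pass to the Iitaka fibration: after further blow-ups there is a morphism $g\colon X'\to Z$ with $\dim Z=\kappa<n$ over which $K_{X'}+\Delta'$ is $\mathbb{Q}$-linearly trivial, and the canonical bundle formula for this lc-trivial fibration — the log canonical analogue of the reduction of \cite{fm} — rewrites $K_{X'}+\Delta'$ as the $g$-pullback of a big divisor on $Z$ underlying a log canonical structure, giving $R(X,\Delta)\cong R(Z,\Delta_Z)$ for a projective log canonical pair $(Z,\Delta_Z)$ of dimension $<n$, whereupon Conjecture~\ref{conj F}$_{\leq n-1}$ finishes the case. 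Realizing the moduli part of the formula as a semiample $\mathbb{Q}$-divisor and checking the fibers have good minimal models are abundance-type inputs, hence again Conjecture~\ref{conj M}$_{\leq n-1}$.

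The substance is the case $\kappa=n$, i.e.\ $K_X+\Delta$ big. I would first replace $(X,\Delta)$ by a $\mathbb{Q}$-factorial dlt modification $f\colon(Y,\Gamma)\to(X,\Delta)$ with $K_Y+\Gamma=f^{\ast}(K_X+\Delta)$; a short divisor computation, comparing rounded-down pullbacks, shows $R(Y,\Gamma)=R(X,\Delta)$, so we may assume $(X,\Delta)$ is $\mathbb{Q}$-factorial dlt. If $\lfloor\Delta\rfloor=0$ the pair is klt of log general type and we are done by \cite{bchm}; if $\lfloor\Delta\rfloor$ is a single prime divisor then $(X,\Delta)$ is plt and we are exactly in the hypothesis of Conjecture~\ref{conj F_big}$_n$. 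In the remaining case I would induct on the number of components of $\lfloor\Delta\rfloor$: choose a component $S$, use adjunction to obtain a $\mathbb{Q}$-factorial dlt pair $(S,\Delta_S)$ of dimension $n-1$ with $(K_X+\Delta)|_S=K_S+\Delta_S$, and compare $R(X,\Delta)$ with $R(S,\Delta_S)$ and with the ideal $\bigoplus_m H^0(X,\mathcal{O}_X(\lfloor m(K_X+\Delta)\rfloor-S))$ through the restriction exact sequences, the needed surjectivity and vanishing coming from the log canonical versions of Koll\'ar's injectivity and torsion-freeness theorems; finite generation of $R(S,\Delta_S)$ is Conjecture~\ref{conj F}$_{n-1}$. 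Controlling this restricted algebra so that the full ring comes out finitely generated is the delicate point, a log canonical analogue of the restricted-algebra arguments of \cite{bchm}.

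The main obstacle throughout is precisely the one flagged in Remark~\ref{fm-reduction}: the reduction of \cite{fm} perturbs the boundary freely to keep the pair klt and the canonical bundle formula tractable, and this is illegitimate for log canonical pairs — lowering a coefficient equal to $1$ changes the log canonical ring and can destroy log canonicity, while the discriminant part of the canonical bundle formula of an lc-trivial fibration genuinely acquires coefficients $\ge 1$. Consequently the whole argument must be carried out with dlt modifications and the lc-trivial fibration formalism, tracking lc centers rather than perturbing them away, and using the log canonical forms of the vanishing theorems; assembling these ingredients into an induction on $n$ that is compatible with the equivalences of Theorem~\ref{main theorem} — in particular feeding Conjecture~\ref{conj M}$_{\leq n-1}$ into the lower-dimensional steps — is the real work.
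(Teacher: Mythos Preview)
Your opening sentence is the paper's entire proof of the corollary: after Theorem~\ref{main theorem} is established, the authors simply remark that Corollary~\ref{cor0} is contained in it. So on the level of the corollary itself you are done, and you match the paper.

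The ``direct'' argument you then sketch is not the route the paper actually takes to thread Conjecture~\ref{conj F_big}$_n$ through to Conjecture~\ref{conj F}$_n$. The paper's chain is Lemma~\ref{lem6} (Conjecture~\ref{conj F_big}$_n \Rightarrow$ Conjecture~\ref{conj M}$_{\leq n-1}$) followed by Lemma~\ref{lem7}, and Lemma~\ref{lem7} handles \emph{every} $\kappa\geq 1$ uniformly: under Conjecture~\ref{conj M}$_{\leq n-1}$ one runs the MMP via \cite{bir-exiII} to make $K_X+\Delta$ nef, then invokes \cite{fukuda-num-eff} to get $\kappa=\nu$, and finally \cite[Theorem~4.6]{fg3} to conclude semi-ampleness. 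In particular the big case $\kappa=n$ is not singled out and no restricted-algebra induction appears.

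Your proposed treatment of the big case has a genuine gap. Inducting on the number of components of $\lfloor\Delta\rfloor$ by comparing $R(X,\Delta)$ with $R(S,\Delta_S)$ and the ideal $\bigoplus_m H^0(X,\mathcal O_X(\lfloor m(K_X+\Delta)\rfloor-S))$ does not, as stated, reduce to a pair with fewer boundary components: that ideal is not the log canonical ring of any obvious $(X,\Delta')$, and the ``restricted algebra'' machinery of \cite{bchm} is tailored to klt pairs and to the full MMP package, not to a bare finite-generation induction. You flag this yourself (``the delicate point''), but it is not a detail --- without it the argument does not close. Similarly, your $0<\kappa<n$ reduction via the lc canonical bundle formula runs into exactly the obstruction of Remark~\ref{fm-reduction}: that reduction is only known when every lc center dominates the Iitaka base, which you cannot arrange in general. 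The paper sidesteps both issues by producing a good minimal model outright.
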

 
 Corollary \ref{cor0} is one of the motivations of this paper. 
 The proof of Theorem \ref{main theorem} (and Corollary \ref{cor0}) 
 heavily depends on the recent developments 
 in the minimal model theory after \cite{bchm}, 
 for example, \cite{bir-exiII}, \cite{dhp-ext}, \cite{fg3}, \cite{g4}, \cite{hmx}, 
 and so on. It is completely different from the reduction techniques discussed in \cite{fm}. 
 
 In Conjecture \ref{conj F_big}, we may assume that $X$ is smooth, 
 $\Delta$ has a simple normal crossing support, $\lfloor \Delta \rfloor$ is irreducible, and 
 $K_X+\Delta$ is big. 
 Hence Conjecture \ref{conj F_big} looks more approachable than Conjecture 
 \ref{conj F} from the analytic viewpoint (cf.~\cite{dhp-ext}). 
  
As corollaries of Theorem \ref{main theorem} and its proof,  we can also see the following:

\begin{cor}\label{cor1}Assume that 
Conjecture \ref{conj F_big}$_n$ holds. 
Let $(X,\Delta)$ be an $n$-dimensional 
$\mathbb Q$-factorial projective divisorial log terminal pair 
such that $\Delta$ is a $\mathbb Q$-divisor. 
If  $\kappa(X, K_X+\Delta) \geq 1$, then $(X, \Delta)$ has a good minimal model.
\end{cor}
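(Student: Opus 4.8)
The plan is to use the Iitaka fibration associated to $K_X+\Delta$ to reduce the assertion to the good minimal model conjecture in dimension at most $n-1$, which is at our disposal under the hypothesis by Theorem \ref{main theorem}. Passing to a log resolution, we may assume that $(X,\Delta)$ is log smooth; this affects neither the existence of a good minimal model nor the Kodaira dimension $\kappa:=\kappa(X,K_X+\Delta)$, which is at least $1$ by assumption. If $\kappa=n$, then $K_X+\Delta$ is big, so $(X,\Delta)$ is a $\mathbb{Q}$-factorial dlt pair of log general type; it then has a minimal model (see, e.g., \cite{bchm}, \cite{bir-exiII}), and the base-point-free theorem for log canonical pairs shows that this minimal model is good. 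So we may assume $1\leq\kappa\leq n-1$, and, after replacing $(X,\Delta)$ by a higher log smooth model, that the Iitaka fibration of $K_X+\Delta$ is a morphism $f_0\colon X\to Z_0$ onto a smooth projective variety $Z_0$ with $\dim Z_0=\kappa$.

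The general fiber $F$ of $f_0$ satisfies $\dim F=n-\kappa\leq n-1$, the pair $(F,\Delta|_F)$ is $\mathbb{Q}$-factorial dlt, and $\kappa(F,K_F+\Delta|_F)=0$. By Conjecture \ref{conj M}$_{\leq n-1}$ (which holds by Theorem \ref{main theorem}), $(F,\Delta|_F)$ has a good minimal model, whose log canonical divisor is $\mathbb{Q}$-linearly trivial because its Kodaira dimension is $0$. Hence, by \cite{bir-exiII} (see also \cite{hmx}), since the general fiber of $f_0$ has a good minimal model we may run a $(K_X+\Delta)$-MMP over $Z_0$ terminating with a pair $(X',\Delta')$ having a good minimal model over $Z_0$; contracting the associated relatively semiample fibration we obtain a contraction $f\colon X'\to Z$ onto a projective variety $Z$ with $\dim Z=\kappa$ and $K_{X'}+\Delta'\sim_{\mathbb{Q},Z}0$, while $(X',\Delta')$ itself is reached from $(X,\Delta)$ by a $(K_X+\Delta)$-MMP. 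The canonical bundle formula for the lc-trivial fibration $f\colon(X',\Delta')\to Z$ now yields, after replacing $Z$ and $X'$ by suitable higher birational models, a boundary $\mathbb{Q}$-divisor $B_Z\geq 0$ with $(Z,B_Z)$ log canonical and a nef $\mathbb{Q}$-divisor $M_Z$ (the moduli part) such that $K_{X'}+\Delta'\sim_{\mathbb{Q}}f^{*}(K_Z+B_Z+M_Z)$.

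By \cite{fg3} together with Conjecture \ref{conj M}$_{\leq n-1}$ — here the relevant fiber dimension of $f$ is $n-\kappa\leq n-1$ — we may take $M_Z$ to be semiample; choosing a general $\mathbb{Q}$-divisor $0\leq M'\sim_{\mathbb{Q}}M_Z$, the pair $(Z,B_Z+M')$ is log canonical of dimension $\kappa\leq n-1$, and $\kappa(Z,K_Z+B_Z+M')=\kappa(X',K_{X'}+\Delta')=\kappa=\dim Z$, so $(Z,B_Z+M')$ is of log general type. Applying Conjecture \ref{conj M}$_{\leq n-1}$ to a $\mathbb{Q}$-factorial dlt model of $(Z,B_Z+M')$ produces a good minimal model of that pair, reached by a $(K_Z+B_Z+M')$-MMP. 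Since $K_{X'}+\Delta'$ is the $f$-pullback of $K_Z+B_Z+M'$, each step of this MMP lifts (after a compatible modification of $X'$) to a step of a $(K_{X'}+\Delta')$-MMP over the corresponding modification of $Z$, and at the end we obtain $f''\colon X''\to Z''$ with $K_{Z''}+B_{Z''}+M''$ semiample, hence with $K_{X''}+\Delta''\sim_{\mathbb{Q}}{f''}^{*}(K_{Z''}+B_{Z''}+M'')$ semiample. As $(X'',\Delta'')$ is obtained from $(X,\Delta)$ by a sequence of steps of a $(K_X+\Delta)$-MMP and has semiample log canonical divisor, it is a good minimal model of $(X,\Delta)$.

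The main obstacle is the semiampleness of the moduli part $M_Z$: b-semiampleness of moduli b-divisors of lc-trivial fibrations is open in general, and in the present situation it must be derived from \cite{fg3} by feeding in the good minimal model conjecture in the fiber dimension $n-\kappa$ (cf.\ \cite{g4}). A second, more routine technical point — handled as in the proof of Theorem \ref{main theorem} — is the verification that the $(K_Z+B_Z+M')$-MMP on the base genuinely lifts to a $(K_X+\Delta)$-MMP and that the lifted sequence terminates.
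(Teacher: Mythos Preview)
Your Iitaka-fibration approach is natural, but the two issues you flag at the end are genuine gaps, not technicalities, and the references you invoke do not close them. The semiampleness of the moduli part $M_Z$ is the b-semiampleness conjecture of Prokhorov--Shokurov for lc-trivial fibrations; it does \emph{not} follow from \cite{fg3} together with Conjecture~\ref{conj M}$_{\leq n-1}$ (nor from \cite{g4}). What the canonical bundle formula yields, after passing to a high model, is only that $M_Z$ is b-nef; the good minimal model conjecture in the fiber dimension does not promote this to semiampleness. Without it you cannot replace $M_Z$ by an effective $M'\sim_{\mathbb Q}M_Z$ keeping $(Z,B_Z+M')$ log canonical, so there is no honest pair on the base to which Conjecture~\ref{conj M}$_{\leq n-1}$ applies. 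The second point is also real: a step of a $(K_Z+B_Z+M')$-MMP on the base does not in general lift to a step of a $(K_{X'}+\Delta')$-MMP, since after a flip or divisorial contraction on $Z$ there is no reason $X'$ admits a morphism to the new base; nothing in the proof of Theorem~\ref{main theorem} addresses this.

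The paper sidesteps both problems. The corollary is deduced in one line from Lemma~\ref{lem6} (Conjecture~\ref{conj F_big}$_n$ $\Rightarrow$ Conjecture~\ref{conj M}$_{\leq n-1}$) and Lemma~\ref{lem7}. The proof of Lemma~\ref{lem7} is short and avoids the canonical bundle formula altogether: by \cite{bir-exiII} one may assume $K_X+\Delta$ is already nef; then \cite[Proposition~3.1]{fukuda-num-eff} (a generalization of Kawamata's argument in \cite{kawamata_pluri}, which feeds in abundance in dimension $\leq n-1$, supplied here by Conjecture~\ref{conj M}$_{\leq n-1}$) gives $\kappa(X,K_X+\Delta)=\nu(X,K_X+\Delta)$; finally \cite[Theorem~4.6]{fg3} converts abundance plus nefness into semi-ampleness. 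No moduli part and no lifting of MMP steps are needed.
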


\begin{cor}\label{cor2}Assume that Conjecture \ref{conj F_big}$_n$ holds.  
Let $(X,\Delta)$ be an $n$-dimensional log canonical pair, $\Delta$ a $\mathbb{Q}$-divisor, 
and $f:X \to S$ a proper morphism 
onto an algebraic variety $S$. Then the relative log canonical ring 
$$R(X/S, \Delta):= 
\bigoplus_{m\geq 0}f_*\mathcal{O}_X(\lfloor m(K_X+\Delta ) \rfloor )$$ is a 
finitely generated $\mathcal{O}_S$-algebra.
\end{cor}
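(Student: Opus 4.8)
The plan is to reduce the assertion to Conjecture~\ref{conj F}$_{\leq n}$ --- which is available under our hypothesis by Theorem~\ref{main theorem} --- by compactifying the morphism $f$ and adding to the boundary the pull-back of an ample divisor from the base. First I would carry out the routine reductions. Finite generation of a quasi-coherent sheaf of graded $\mathcal{O}_S$-algebras is local on $S$, so we may assume $S=\operatorname{Spec}A$ is affine; then $R(X/S,\Delta)$ is, as an $A$-algebra, just $\bigoplus_{m\geq 0}H^0(X,\mathcal{O}_X(\lfloor m(K_X+\Delta)\rfloor))$. By Chow's lemma we may replace $X$ by a variety projective over $S$, and by a log resolution we may assume $(X,\Delta)$ is log smooth: if $\mu\colon X'\to X$ is projective birational and $K_{X'}+\Delta'=\mu^*(K_X+\Delta)+E$ with $E\geq 0$ $\mu$-exceptional and $\Delta',E$ sharing no components, then $\mu_*\mathcal{O}_{X'}(\lfloor m(K_{X'}+\Delta')\rfloor)=\mathcal{O}_X(\lfloor m(K_X+\Delta)\rfloor)$ for all $m$, so the ring is unchanged. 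Thus we may assume $f\colon X\to S$ is projective, $X$ is quasi-projective, and $(X,\Delta)$ is log smooth of dimension $n$.

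Next I would compactify the whole situation. Choose a projective compactification $S\subseteq\overline{S}$ and a projective variety $\overline{X}\supseteq X$ with a projective morphism $\overline{f}\colon\overline{X}\to\overline{S}$ extending $f$ such that $\overline{f}^{-1}(S)=X$ (take closures in a product of projective spaces) and such that $\Sigma:=\overline{X}\setminus X$ is a divisor whose support together with the closure $\widetilde{\Delta}$ of $\Delta$ is simple normal crossing. Put $\overline{\Delta}:=\widetilde{\Delta}+\Sigma_{\mathrm{red}}$, so that $(\overline{X},\overline{\Delta})$ is a projective log smooth (hence log canonical) pair of dimension $n$ with $\mathbb{Q}$-boundary and $\overline{\Delta}|_X=\Delta$. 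By flat base change along the open immersion $S\hookrightarrow\overline{S}$ one gets $\bigl(\overline{f}_*\mathcal{O}_{\overline{X}}(\lfloor m(K_{\overline{X}}+\overline{\Delta})\rfloor)\bigr)|_S=f_*\mathcal{O}_X(\lfloor m(K_X+\Delta)\rfloor)$ for every $m$, so it suffices to prove that $\mathcal{R}:=\bigoplus_{m\geq 0}\overline{f}_*\mathcal{O}_{\overline{X}}(\lfloor m(K_{\overline{X}}+\overline{\Delta})\rfloor)$ is a finitely generated $\mathcal{O}_{\overline{S}}$-algebra.

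Now let $\overline{H}$ be a sufficiently ample (say very ample) divisor on $\overline{S}$, chosen general enough that $(\overline{X},\overline{\Delta}+\overline{f}^*\overline{H})$ is still log canonical --- a Bertini-type statement for log canonical singularities, applicable because $|\overline{f}^*\overline{H}|$ is base point free. This is again a projective log canonical pair of dimension $n$ with $\mathbb{Q}$-boundary, so by Conjecture~\ref{conj F}$_{\leq n}$ the ring $R(\overline{X},\overline{\Delta}+\overline{f}^*\overline{H})$ is finitely generated; by the projection formula (and $\overline{f}^*\overline{H}$ being Cartier) it equals $\bigoplus_{m\geq 0}H^0\bigl(\overline{S},\mathcal{R}_m\otimes\mathcal{O}_{\overline{S}}(m\overline{H})\bigr)$, where $\mathcal{R}_m$ is the degree-$m$ piece of $\mathcal{R}$. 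Finally I would descend finite generation from this ring of global sections to the sheaf $\mathcal{R}$ itself: if $\overline{H}$ is chosen ample enough that every $\mathcal{R}_m\otimes\mathcal{O}_{\overline{S}}(m\overline{H})$ is globally generated, then a finitely generated graded ring of global sections (generated in degrees $\leq d_0$, say) forces $\mathcal{R}_m\otimes\mathcal{O}_{\overline{S}}(m\overline{H})$, hence $\mathcal{R}_m$, to be generated over $\mathcal{O}_{\overline{S}}$ by the pieces of degree $\leq d_0$ for all $m$; thus $\mathcal{R}$ is a finitely generated $\mathcal{O}_{\overline{S}}$-algebra, and restricting to $S$ gives the corollary.

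The main obstacle is precisely this last step: exhibiting one sufficiently ample $\overline{H}$ on $\overline{S}$ for which \emph{all} the twisted sheaves $\mathcal{R}_m\otimes\mathcal{O}_{\overline{S}}(m\overline{H})$ are simultaneously globally generated, i.e.\ a uniform, linear-in-$m$ Castelnuovo--Mumford regularity (or Serre vanishing) bound along the grading; here one has to exploit that $\mathcal{O}_{\overline{X}}(\lfloor m(K_{\overline{X}}+\overline{\Delta})\rfloor)$ is bounded linearly in $m$, and this is where genuine care is needed. An alternative route, closer in spirit to the proof of Theorem~\ref{main theorem}, is to pass to a $\mathbb{Q}$-factorial dlt model over $\overline{S}$ and run a relative $(K_{\overline{X}}+\overline{\Delta})$-minimal model program over $\overline{S}$: when $\dim\overline{S}\geq 1$ the fibres have dimension $\leq n-1$, so Conjecture~\ref{conj M}$_{\leq n-1}$ (available under our hypothesis) produces a relative good minimal model along which $K+\overline{\Delta}$ is relatively semiample and the relative log canonical ring is manifestly finitely generated, while the base case $\dim\overline{S}=0$ is exactly Conjecture~\ref{conj F}$_n$.
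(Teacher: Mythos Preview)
The paper gives no self-contained argument here; it simply asserts that the proof of \cite[Theorem~1.1]{fujino-f.g.} carries over, using the stronger inputs now available from \cite{bir-exiII} and \cite{fg3}. That proof is precisely your \emph{alternative} route: compactify, replace by a $\mathbb{Q}$-factorial dlt model, and run a $(K+\Delta)$-MMP over $\overline S$, using the good minimal model conjecture on the generic fibre (dimension $\leq n-1$) to obtain either a relative good minimal model---where relative semi-ampleness makes $R(X/\overline S,\Delta)$ finitely generated---or a Mori fibre space $X'\to Z$ over $\overline S$, where $-(K_{X'}+\Delta')$ is $Z$-ample so the positive-degree part of the relative ring vanishes after a Veronese and finite generation is immediate. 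So your second plan matches the paper's; to make it complete you should (i) treat the non-pseudo-effective case explicitly rather than asserting a good minimal model always exists, and (ii) cite the bridge from the projective Conjecture~\ref{conj M}$_{\leq n-1}$ to the existence of relative good minimal models over a positive-dimensional base, as in \cite{bir-exiII}.

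Your first approach, by contrast, does not go through, and the obstacle you flag is the real one. Knowing that
\[
\bigoplus_{m\geq 0} H^0\bigl(\overline S,\ \mathcal R_m\otimes\mathcal O_{\overline S}(m\overline H)\bigr)
\]
is finitely generated does \emph{not} imply that $\mathcal R=\bigoplus_m\mathcal R_m$ is a finitely generated $\mathcal O_{\overline S}$-algebra without the uniform global generation of $\mathcal R_m\otimes\mathcal O_{\overline S}(m\overline H)$; and there is no known MMP-free way to exhibit a single $\overline H$ with that property for all $m$. In practice such a linear-in-$m$ regularity bound is available only \emph{after} one knows relative semi-ampleness of $K_{\overline X}+\overline\Delta$ over $\overline S$, i.e.\ after running the MMP---which is circular. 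So the first route should be abandoned in favour of the second.
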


If Conjecture \ref{conj F_big}$_n$ implies Conjecture \ref{conj M}$_n$, 
then Conjectures \ref{conj F}, \ref{conj F_big}, and \ref{conj M} hold 
in any dimension by Theorem \ref{main theorem}. 
Unfortunately, Corollary \ref{cor1} is far from the complete solution of Conjecture 
\ref{conj M}$_n$ under Conjecture \ref{conj F_big}$_n$. 
For the details of Conjecture \ref{conj M}, we recommend the reader to see 
\cite[Section 5]{fg3} (see also Section \ref{sec4}:~Appendix). 

In \cite{fujino-surface}, the first author solved Conjecture \ref{conj F}$_4$. 
Conjecture \ref{conj F}$_n$ with $n\geq 5$ is widely open. 
For surfaces, $R(X, \Delta)$ is known to 
be finitely generated under the assumption that $\Delta$ is a boundary 
$\mathbb Q$-divisor and $X$ is $\mathbb Q$-factorial. 
When $\dim X=2$, 
we do not have to assume that the pair $(X, \Delta)$ is log canonical 
for the minimal model theory. For the details, see \cite{fujino-surface}. 

\begin{ack}
The first author was partially supported 
by the Grant-in-Aid for Young Scientists (A) $\sharp$24684002 from JSPS.  
The second author was partially supported 
by the Grant-in-Aid from JSPS $\sharp$24840009 
and Research expense from the JRF fund. The authors 
would like to thank the referee for useful 
suggestions. In particular, 
he/she kindly informed them of his/her alternative proof of Lemma \ref{lem4}. 
\end{ack} 

We will work over $\mathbb C$, the field of complex numbers, throughout this paper. 
We will make use of the standard notation as in \cite{kamama}, \cite{komo},
\cite{bchm}, \cite{fujino-what} and \cite{fujino-funda}. 

\section{Preliminaries} In this section, we collect together some definitions and notation. 

\begin{say}[Pairs]
A pair $(X, \Delta)$ consists of a normal variety $X$ over 
$\mathbb C$ and an effective $\mathbb R$-divisor 
$\Delta$ on $X$ such that $K_X+\Delta$ is $\mathbb R$-Cartier. 
A pair $(X, \Delta)$ is called {\em{klt}} (resp.~{\em{lc}}) if for any projective birational morphism 
$g:Z\to X$ from a normal variety $Z$, every coefficient of $\Delta_Z$ is $<1$ (resp.~$\leq 1$) where 
$K_Z+\Delta_Z:=g^*(K_X+\Delta)$. Moreover a pair $(X, \Delta)$ is called {\em{canonical}} 
(resp.~{\em{plt}}) 
if for any projective birational morphism 
$g:Z\to X$ from a normal variety $Z$, every coefficient of $g$-exceptional 
components of  $\Delta_Z$ is $\leq 0$ 
(resp.~$<1$). 
Let $(X, \Delta)$ be an lc pair. If there is 
a projective birational morphism $g:Z\to X$ from a smooth projective variety $Z$ such that 
every coefficient of $g$-exceptional 
components of $\Delta_Z$ is $<1$, the exceptional locus $\Exc(g)$ of $g$ is a divisor, and $\Exc (g)\cup 
\Supp \Delta_Z$ is a simple normal crossing divisor on $Z$, then $(X, \Delta)$ is called {\em{dlt}}. 

We note that {\em{klt}}, {\em{plt}}, {\em{dlt}}, and {\em{lc}} 
stand for {\em{kawamata log terminal}}, {\em{purely log terminal}}, {\em{divisorial 
log terminal}}, and {\em{log canonical}}, respectively. 
\end{say}

Let us recall the definition of {\em{log minimal models}}. 
In Definition \ref{real-def-minimalmodelsenseofBS}, all the varieties are assumed to be  
projective. 

\begin{defn}[cf.~{\cite[Definition 2.1]{bir-exiII}}]\label{real-def-minimalmodelsenseofBS}
A pair $(Y, \Delta_Y)$ is a \emph{log birational model} of $(X, \Delta)$ if we are given a birational map
$\phi\colon X\dashrightarrow Y$ and $\Delta_Y=\Delta^\sim+E$ where $\Delta^\sim$ is 
the birational transform of $\Delta$ and 
$E$ is the reduced exceptional divisor of $\phi^{-1}$, that is, $E=\sum E_j$ where $E_j$ is 
a prime divisor on $Y$ which is exceptional over $X$ for every $j$. 
A log birational model $(Y, \Delta_Y)$ is a \emph{nef model} 
of $(X, \Delta)$ if in addition
\begin{itemize}
\item[(1)] $(Y, \Delta_Y)$ is $\mathbb{Q}$-factorial dlt, and 
\item[(2)] $K_Y+\Delta_Y$ is nef. 
\end{itemize}
And  we call a nef model $(Y, \Delta_Y)$ 
a \emph{log minimal model of $(X, \Delta)$ {\em{(}}in the sense of Birkar--Shokurov{\em{)}}} if in addition
\begin{itemize}
\item[(3)] for any prime divisor $D$ on $X$ which is exceptional over $Y$, we have
$$
a(D,X,\Delta)<a(D,Y,\Delta_Y).
$$
\end{itemize}

Let $(Y, \Delta_Y)$ be a log minimal model of $(X, \Delta)$. 
If $K_Y+\Delta_Y$ is semi-ample, then $(Y, \Delta_Y)$ is called a {\em{good 
minimal model}} of $(X, \Delta)$. 
\end{defn}
When $(X,\Delta)$ is plt, 
a log minimal model of $(X, \Delta)$ in the sense of Birkar--Shokurov is a log minimal model 
in the traditional sense (see \cite{komo} and \cite{bchm}), that is, $\phi\colon X\dashrightarrow Y$ 
extracts no divisors. 
For the details, see \cite[Remark 2.6]{bir-I}. 

\begin{rem}\label{2323}
Assume that Conjecture \ref{conj M}$_{\leq n}$ holds. 
Let $(X, \Delta)$ be a projective $\mathbb Q$-factorial 
dlt pair with $\dim X=n$ such that $K_X+\Delta$ is pseudo-effective. 
Then, by \cite[Corollary 1.6]{bir-exiII}, there is a sequence of divisorial 
contractions and flips starting with $(X, \Delta)$ 
and ending up with a good minimal model $(Y, \Delta_Y)$. 
In particular, $X\dashrightarrow Y$ extracts no divisors. 
Therefore, $(Y, \Delta_Y)$ is a log minimal model of $(X, \Delta)$ in the traditional sense. 
\end{rem}

\section{Proof of Main Theorem}For the proof of 
the main theorem:~Theorem \ref{main theorem}, 
we discuss the relationship among the following conjectures:

\begin{theorema}[Abundance conjecture]\label{conj A} 
Let $(X, \Delta)$ be a projective log canonical pair. 
If $K_X+\Delta$ is nef, then $K_X+\Delta$ is semi-ample.
\end{theorema}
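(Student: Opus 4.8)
The plan is to reduce the log canonical abundance statement to a combination of a good minimal model input in the relevant dimension, lower-dimensional abundance, and two cleanly isolated hard cores. First I would pass to a $\mathbb{Q}$-factorial dlt model. Given an lc pair $(X,\Delta)$ with $K_X+\Delta$ nef, take a dlt blow-up $f\colon(Y,\Delta_Y)\to(X,\Delta)$ with $(Y,\Delta_Y)$ $\mathbb{Q}$-factorial dlt and $K_Y+\Delta_Y=f^*(K_X+\Delta)$. Since $f_*\mathcal{O}_Y=\mathcal{O}_X$, semi-ampleness of $K_Y+\Delta_Y$ descends to $K_X+\Delta$, so it suffices to treat the $\mathbb{Q}$-factorial dlt case. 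The key structural observation is that once $K_X+\Delta$ is nef, $(X,\Delta)$ is itself a log minimal model, so Conjecture \ref{conj A} becomes exactly the assertion that this minimal model is good. In particular, granting Conjecture \ref{conj M} in the appropriate dimension would already yield Conjecture \ref{conj A}: any good minimal model produced by \ref{conj M} is crepant birational to $(X,\Delta)$ (both being nef, hence isomorphic in codimension one up to flops by \cite{bir-exiII}), and semi-ampleness is invariant across such a crepant birational map.

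For an unconditional attack I would split abundance along the numerical dimension $\nu:=\nu(X,K_X+\Delta)$. When $\nu=\dim X$, that is, $K_X+\Delta$ is nef and big, semi-ampleness follows from the base-point-free theorem in its log canonical form (see \cite{fujino-funda}). When $\nu=0$, after establishing non-vanishing $\kappa(X,K_X+\Delta)\geq 0$ one shows $K_X+\Delta$ is torsion: a nef divisor of numerical dimension zero is numerically trivial, and an effective divisor numerically trivial on a dlt pair reduces to the theory of numerically trivial divisors and is $\mathbb{Q}$-linearly trivial, whence semi-ample.

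The intermediate range $0<\nu<\dim X$ I would handle through the Iitaka fibration together with the canonical bundle formula. Assuming non-vanishing gives $\kappa\geq 0$; if moreover $\kappa=\nu$, the Iitaka fibration $g\colon X'\dashrightarrow Z$ of $K_X+\Delta$ lets one write, via the Fujino--Mori canonical bundle formula, a klt/lc pair $K_Z+B_Z+M_Z$ on the base with $B_Z$ a boundary and $M_Z$ a nef part, and one applies lower-dimensional abundance (induction on $\dim X$) on the base to push semi-ampleness back up to $K_X+\Delta$. The reduction from the lc setting to the dlt/klt setting — restricting to the lc centers inside $\lfloor\Delta\rfloor$ and gluing back via the extension theorems of \cite{dhp-ext} and the adjunction/gluing formalism of \cite{fg3} — is the technical mechanism that makes this induction close up.

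The hard part will be the two genuinely open cores: the non-vanishing statement (pseudo-effective $\Rightarrow\kappa\geq 0$) and the equality $\kappa=\nu$ in the intermediate numerical dimension. These are precisely the inputs the rest of the argument is designed to be \emph{reduced to}, and neither is accessible by perturbation and MMP techniques alone; by contrast, the lc-to-klt reduction, while delicate, lies within reach of the dlt restriction plus extension-theorem package cited above. I would therefore expect any complete argument to remain conditional, matching the role of Conjecture \ref{conj A} as an auxiliary conjecture feeding the equivalences of Theorem \ref{main theorem}.
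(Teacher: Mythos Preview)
The statement you are addressing is \emph{Conjecture~\ref{conj A}}, the abundance conjecture. The paper does not prove it; it is introduced as an open conjecture and used only as an intermediary in the chain of implications behind Theorem~\ref{main theorem}. There is therefore no ``paper's own proof'' to compare your proposal against, and you yourself acknowledge as much in your final paragraph.

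Your outline is nonetheless a coherent summary of the standard reduction strategy for abundance (dlt blow-up, stratification by numerical dimension, base-point-free theorem in the nef and big case, Iitaka fibration plus canonical bundle formula in the intermediate range), and you correctly isolate non-vanishing and the equality $\kappa=\nu$ as the genuine hard cores. Where the paper does treat abundance \emph{conditionally}, its route is different from yours: Lemma~\ref{lem3} deduces Conjecture~\ref{conj A}$_{\leq n-1}$ from Conjecture~\ref{conj F_big}$_n$ together with Conjecture~\ref{conj N}$_{\leq n-1}$ by first settling the klt case via \cite[Theorem~A.6]{lazic} (which converts finite generation of a plt ring in one higher dimension into klt abundance) and then upgrading klt to lc via \cite[Theorem~5.5]{fg3}, rather than by a numerical-dimension case split. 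Your remark that Conjecture~\ref{conj M} already implies Conjecture~\ref{conj A} is exactly the content of Remark~\ref{rem36}. One caveat on your intermediate-$\nu$ step: as noted in Remark~\ref{fm-reduction}, the Fujino--Mori canonical bundle formula argument for lc pairs goes through cleanly only when every lc center is dominant onto the Iitaka base, so the ``gluing back via extension theorems'' you allude to is doing real work there and is not a formality.
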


\begin{theorema}[Non-vanishing conjecture]\label{conj N} 
Let $(X,\Delta)$ be a 
projective log canonical pair.  If $K_X+\Delta$ is pseudo-effective, 
then there exists 
some effective $\mathbb{R}$-divisor $D$ such that 
$D \sim_{\mathbb{R}} K_X+\Delta$.
\end{theorema}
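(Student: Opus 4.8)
The plan is to deduce the non-vanishing conjecture (Conjecture~\ref{conj N}) from the good minimal model conjecture (Conjecture~\ref{conj M}), which I am permitted to assume. The underlying mechanism is that once a good minimal model exists, its log canonical divisor is semi-ample, hence $\mathbb{R}$-linearly equivalent to an effective divisor, and this effectivity descends to the original pair by the negativity lemma.

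First I would reduce to the $\mathbb{Q}$-factorial dlt case. Given a projective lc pair $(X,\Delta)$ with $K_X+\Delta$ pseudo-effective, take a $\mathbb{Q}$-factorial dlt modification $f\colon Y\to X$, so that $(Y,\Delta_Y)$ is $\mathbb{Q}$-factorial dlt and $K_Y+\Delta_Y=f^*(K_X+\Delta)$. Since $f$ is birational, $K_Y+\Delta_Y$ is again pseudo-effective, and any effective $\mathbb{R}$-divisor that is $\mathbb{R}$-linearly equivalent to $K_Y+\Delta_Y$ pushes forward under $f$ to one $\mathbb{R}$-linearly equivalent to $K_X+\Delta$. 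Hence it suffices to establish non-vanishing for $(Y,\Delta_Y)$.

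Next I would apply Conjecture~\ref{conj M} to $(Y,\Delta_Y)$ to obtain a good minimal model $(Z,\Delta_Z)$; by Remark~\ref{2323} the induced map $\phi\colon Y\dashrightarrow Z$ is a composition of divisorial contractions and flips, extracts no divisors, and $K_Z+\Delta_Z$ is semi-ample. Semi-ampleness provides, for sufficiently divisible $m$, a nonempty base-point-free linear system $|m(K_Z+\Delta_Z)|$, so that $K_Z+\Delta_Z\sim_{\mathbb{R}}D_Z$ for some effective $\mathbb{R}$-divisor $D_Z$. Choosing a common resolution $p\colon W\to Y$ and $q\colon W\to Z$ of $\phi$, the negativity of the $(K_Y+\Delta_Y)$-MMP yields $p^*(K_Y+\Delta_Y)=q^*(K_Z+\Delta_Z)+F$ with $F\geq 0$; therefore $p^*(K_Y+\Delta_Y)\sim_{\mathbb{R}}q^*D_Z+F\geq 0$, and pushing forward by $p$ shows that $K_Y+\Delta_Y$ is $\mathbb{R}$-linearly equivalent to an effective divisor. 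Combined with the reduction of the previous paragraph, this finishes the argument.

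I expect no single step above to be the obstacle: each is routine once Conjecture~\ref{conj M} is granted. The honest point is rather that the difficulty of non-vanishing has been absorbed into the hypothesis, since constructing the good minimal model---equivalently, supplying the semi-ampleness of Conjecture~\ref{conj A} together with termination---is precisely where the content lies. The argument above only records the easy implication that a good minimal model forces $K_X+\Delta$ to be effective up to $\mathbb{R}$-linear equivalence, so the genuinely delicate input, were one to attempt Conjecture~\ref{conj N} unconditionally, would be exactly the abundance and termination feeding into Conjecture~\ref{conj M}.
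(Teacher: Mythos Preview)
The statement is presented in the paper as an open conjecture, not a theorem, so there is no formal proof to compare against; the nearest thing is Remark~\ref{rem36}, where the authors note that Conjecture~\ref{conj M}$_{\leq n}$ implies Conjecture~\ref{conj N}$_{\leq n}$ ``by using dlt blow-ups,'' pointing to Remark~\ref{2323}. Your argument is a correct and fully written-out version of exactly that observation---dlt blow-up, apply Conjecture~\ref{conj M} to get a good minimal model, then transport effectivity back via the negativity comparison on a common resolution---so your approach coincides with the paper's sketch, and your closing paragraph accurately identifies that all the content has been pushed into the assumed Conjecture~\ref{conj M}.
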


\begin{theorema}[Non-vanishing conjecture for smooth 
varieties]\label{conj N_sm} Let $X$ be a 
smooth projective variety.  If $K_X$ is pseudo-effective, then there exists 
some effective $\mathbb{Q}$-divisor $D$ such that $D \sim_{\mathbb{Q}} K_X$.
\end{theorema}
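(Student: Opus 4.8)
The statement is the non-vanishing conjecture for smooth varieties, which is known unconditionally only up to dimension three; so I will describe the direct strategy I would pursue rather than a complete proof. The plan is to stratify by the numerical dimension $\nu=\nu(X,K_X)$. If $\nu=\dim X$ then $K_X$ is big, so $h^0(X,\mathcal{O}_X(mK_X))$ grows like $m^{\dim X}$ and $K_X$ is already $\mathbb{Q}$-linearly equivalent to an effective divisor, with nothing left to prove. At the opposite extreme $\nu=0$, the issue is to upgrade numerical triviality of the positive part to honest effectivity, and this case is settled by the abundance theory for canonical classes of numerical dimension zero (cf.~\cite{g4}, \cite{fg3}), which gives $\kappa(X,K_X)=0$. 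Thus the real content lies in the range $0<\nu<\dim X$.

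For that range the plan is induction on $\dim X$ by restriction and extension. First I would use Nakayama's $\sigma$-decomposition to isolate the positive part $P_\sigma(K_X)$; then, perturbing by a small ample divisor, I would create a distinguished divisor $S$ (or a log canonical centre) that meets the positive part, apply the lower-dimensional instance of the non-vanishing statement --- in its log form, \ref{conj N} on $S$ --- to produce a section of the restricted adjoint system, and finally lift that section to $X$ by a Siu-type extension theorem of the kind developed in \cite{dhp-ext}. The hard part is exactly this lifting step: when $K_X$ is pseudo-effective but not big there is no unconditional extension mechanism guaranteeing that a section on $S$ deforms to all of $X$, and controlling the interplay between the extension and the $\sigma$-decomposition along the induction is precisely what keeps \ref{conj N_sm} open for $\dim X\geq 4$.

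I would record one cheap reduction for contrast: granting Conjecture \ref{conj M} in dimension $\dim X$, the pair $(X,0)$ --- smooth, hence $\mathbb{Q}$-factorial klt --- has a good minimal model $(Y,0)$ with $K_Y$ semi-ample and thus effective, whence $\kappa(X,K_X)=\kappa(Y,K_Y)\geq 0$. Since this merely transfers the difficulty onto another member of the same conjectural web, it is the direct attack above, and in particular the extension step, that constitutes the genuine obstacle.
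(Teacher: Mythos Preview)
There is nothing to grade here against the paper's own argument, because in the paper this statement is a \emph{conjecture} (Conjecture~\ref{conj N_sm}), not a theorem: the paper never proves it unconditionally, and you correctly recognise that it is open for $\dim X\geq 4$. Your outline of a direct attack via the $\sigma$-decomposition, restriction to a centre, and DHP-type extension is a reasonable summary of one known line of approach, but it is not something the paper attempts.

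What the paper \emph{does} prove about this conjecture is Lemma~\ref{lem4}: Conjecture~\ref{conj F_big}$_n$ implies Conjecture~\ref{conj N_sm}$_{\leq n-1}$. The mechanism is quite different from your sketch. Given a smooth $(n{-}1)$-fold $X$ with $K_X$ pseudo-effective, one forms the $\mathbb P^1$-bundle $Y=\mathbb P_X(\mathcal O_X\oplus\mathcal O_X(-1))$, lets $E$ be the negative section (so $E\simeq X$), and chooses a boundary $\Delta_Y=E+f^*H$ making $(Y,\Delta_Y)$ plt with $K_Y+\Delta_Y$ big. Conjecture~\ref{conj F_big}$_n$ then gives finite generation of $R(Y,\Delta_Y)$, hence a good minimal model; since $K_E$ is pseudo-effective, $E$ is not uniruled and survives the MMP, so one restricts a general member of $|m(K_{Y'}+\Delta_{Y'})|$ back to $E$ to obtain an effective $\mathbb Q$-divisor $\sim_{\mathbb Q}K_X$. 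An alternative proof interprets $R(Y,E+f^*\widetilde H)$ as the Cox-type ring $R(X;K_X,K_X+A)$ and invokes \cite{cl}.

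Your ``cheap reduction'' at the end---assume Conjecture~\ref{conj M}$_{\dim X}$ and pass to a good minimal model---is in the same conditional spirit, but note that the paper's conditional input is strictly weaker in dimension count: it needs only finite generation for big plt pairs in one dimension \emph{higher}, not the full good-minimal-model conjecture in the \emph{same} dimension. That trade-off (raising the dimension by one but asking only for the big case) is the whole point of the $\mathbb P^1$-bundle trick, and it is absent from your discussion.
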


For the above conjectures, we show the following lemmas:

\begin{lem}\label{lem3} 
Conjecture \ref{conj F_big}$_n$ and 
Conjecture \ref{conj N}$_{\leq {n-1}}$ imply Conjecture \ref{conj A}$_{\leq n-1}$.
\end{lem}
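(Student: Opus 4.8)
The plan is to prove Conjecture~\ref{conj A}$_d$ for every $d\le n-1$ by induction on $d$, under the standing hypotheses Conjecture~\ref{conj F_big}$_n$ and Conjecture~\ref{conj N}$_{\le n-1}$; the cases $d\le 1$ are classical. So assume Conjecture~\ref{conj A}$_{\le d-1}$, and let $(X,\Delta)$ be a projective lc pair of dimension $d\le n-1$ with $K_X+\Delta$ nef. First I would carry out the usual reductions: passing to a dlt blow-up (which is harmless, since semi-ampleness of $K_X+\Delta$ is unchanged by a crepant pullback) we may assume $(X,\Delta)$ is $\mathbb Q$-factorial dlt; and by a Shokurov-type rational polytope argument based on the cone theorem for lc pairs — writing $K_X+\Delta$ as a convex $\mathbb Q$-combination of nef divisors $K_X+\Delta_i$ with each $\Delta_i$ a $\mathbb Q$-boundary — we may assume $\Delta$ is a $\mathbb Q$-divisor. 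By Conjecture~\ref{conj N}$_d$ there is an effective $\mathbb Q$-divisor $D$ with $D\sim_{\mathbb Q}K_X+\Delta$; put $\kappa:=\kappa(X,K_X+\Delta)\ge 0$ and split into two cases.

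If $\kappa\ge 1$, I would pass to a resolution on which the Iitaka fibration becomes a morphism $f\colon X\to Y$ with $\dim Y=\kappa\le d-1$. A very general fibre $F$ has $\dim F<d$ and $\kappa(F,(K_X+\Delta)|_F)=0$, so by the induction hypothesis $(K_X+\Delta)|_F\sim_{\mathbb Q}0$; hence $f$ is an lc-trivial fibration, and the canonical bundle formula of \cite{fg3} produces a lower-dimensional lc pair on $Y$ with $K_X+\Delta\sim_{\mathbb Q}f^{*}$ of its log canonical divisor — the moduli part being controlled, in dimension $\le d-1$, by reduction to Conjecture~\ref{conj A}$_{\le d-1}$ as in \cite{fg3}. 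Applying the induction hypothesis on $Y$ makes that divisor semi-ample, hence so is $K_X+\Delta$, and we are done in this case.

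The essential case is $\kappa=0$, and this is where Conjecture~\ref{conj F_big}$_n$ is used. Now $D$ is the unique effective divisor in its $\mathbb Q$-linear class; if $D=0$ then $K_X+\Delta\sim_{\mathbb Q}0$ and we are done, so assume $D\ne 0$. By \cite{g4} it is enough to show that $K_X+\Delta$ is numerically trivial, i.e.\ $\nu(X,K_X+\Delta)=0$. The plan is to perform adjunction along a component $S$ of $\lceil D\rceil$, arranged (after a further blow-up if necessary) to be a minimal lc centre of $(X,\Delta+tD)$ for the log canonical threshold $t$: since $K_X+\Delta+tD$ is still nef and $(S,\Delta_S)$ is lc of dimension $d-1$, the induction hypothesis gives $(K_X+\Delta)|_S$ semi-ample, and the remaining task is to propagate this numerical information back to $X$. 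To do so one attaches to $(X,\Delta)$ and $D$ an auxiliary projective plt pair of log general type — with irreducible round-down built from $S$, and enlarged to dimension exactly $n$ by taking a product with a fixed klt pair of log general type whose canonical ring is classically known to be finitely generated — so that Conjecture~\ref{conj F_big}$_n$ supplies finite generation of the associated log canonical ring; combined with the existence of log minimal models with scaling (\cite{bir-exiII}, \cite{bchm}) and the extension techniques of \cite{dhp-ext}, this forces $\nu(X,K_X+\Delta)=0$, whereupon \cite{g4} finishes the argument. The hard part is precisely this transfer step: because $K_X+\Delta$ is nef but not big, one cannot perturb it into the big cone without destroying all information, so the finite generation provided by Conjecture~\ref{conj F_big}$_n$ must be extracted on a genuinely big auxiliary pair and then carried back, through adjunction and extension, to the numerical triviality of $K_X+\Delta$.
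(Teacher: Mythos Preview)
Your initial reductions (dlt blow-up, Shokurov polytope to pass to a $\mathbb Q$-boundary) match the paper's exactly. After that the two arguments diverge, and your $\kappa=0$ case has a genuine gap.

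The paper does not induct on the Iitaka dimension at all. After the same reductions it takes a product with an abelian variety to arrange $\dim X=n-1$, and then separates by singularity type rather than by $\kappa$. For \emph{klt} pairs, abundance in dimension $n-1$ is obtained directly from Conjecture~\ref{conj F_big}$_n$ via \cite[Theorem~A.6]{lazic}, which already packages the passage from finite generation in dimension $n$ to klt abundance one dimension down; this is the sole place Conjecture~\ref{conj F_big}$_n$ enters, and no auxiliary pair has to be manufactured by hand. For general lc pairs one then invokes \cite[Theorem~5.5]{fg3}, which reduces lc abundance to klt abundance together with Conjecture~\ref{conj N}$_{\le n-1}$. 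The entire proof is two citations after the standard reductions.

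By contrast, in your $\kappa=0$ case you propose to attach to $(X,\Delta)$ ``an auxiliary projective plt pair of log general type with irreducible round-down built from $S$'', enlarge it to dimension $n$ by a product, apply Conjecture~\ref{conj F_big}$_n$, and then use MMP with scaling and the extension techniques of \cite{dhp-ext} to force $\nu(X,K_X+\Delta)=0$. But you neither construct this auxiliary pair concretely (what is the ambient variety, and why is its log canonical divisor big when $K_X+\Delta$ is nef with $\kappa=0$?) nor explain the transfer mechanism: granted that some big plt pair in dimension $n$ has finitely generated log canonical ring, why does that constrain the numerical dimension of the original $K_X+\Delta$? You yourself flag this as ``the hard part'' and then leave it as a wish; moreover the extension theorems of \cite{dhp-ext} lift sections \emph{from} a boundary divisor \emph{to} the ambient space, so invoking them to ``carry back'' numerical triviality to $X$ is not an argument as it stands. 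This is exactly the difficulty that \cite[Theorem~A.6]{lazic} handles, and the lemma is really a corollary of that result together with \cite[Theorem~5.5]{fg3}.
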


\begin{lem}\label{lem4} 
Conjecture \ref{conj F_big}$_n$ 
implies Conjecture \ref{conj N_sm}$_{\leq n-1}$.
\end{lem}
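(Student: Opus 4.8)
The plan is to reduce Conjecture \ref{conj N\_sm}$_{\leq n-1}$ to Conjecture \ref{conj F\_big}$_n$ by the standard trick of blowing up a klt (in fact plt) pair whose log canonical divisor carries the information of $K_X$, and arranging that this divisor becomes big after adding one extra boundary component of log general type. So let $X$ be a smooth projective variety of dimension $m\leq n-1$ with $K_X$ pseudo-effective; we want an effective $\mathbb{Q}$-divisor $D\sim_{\mathbb{Q}}K_X$. First I would observe that the non-vanishing for pseudo-effective $K_X$ is equivalent to showing $\kappa(X,K_X)\geq 0$, and that if $\kappa(X,K_X)\geq 0$ we are done trivially, so the real content is to produce a single section. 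The idea is to pass to a higher-dimensional pair: consider $Y=X\times \mathbb{P}^1$ (or a suitable blow-up thereof) — but the cleaner route is to take an auxiliary ample divisor and build a plt pair on $X$ itself.

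Concretely, here is the step I would carry out. Fix a sufficiently ample smooth divisor $H$ on $X$ and a rational number $0<\varepsilon\ll 1$; consider the pair $(X,\varepsilon H)$. Since $K_X$ is pseudo-effective and $H$ is ample, $K_X+\varepsilon H$ is big, and for $\varepsilon$ small $(X,\varepsilon H)$ is klt (even canonical) — but its round-down boundary is empty, so this does not directly match Conjecture \ref{conj F\_big}, which requires $\lfloor\Delta\rfloor$ irreducible. To remedy this, I would instead take a $\mathbb{P}^1$-bundle: let $\pi\colon Y=\mathbb{P}_X(\mathcal{O}_X\oplus\mathcal{O}_X(-A))\to X$ for a suitable ample $A$, with two disjoint sections $X_0,X_\infty$; then $\dim Y=m+1\leq n$, the pair $(Y,X_0+X_\infty)$ is (log smooth hence) plt after discarding one of them, and a direct computation of $K_Y+X_0$ via the relative canonical bundle formula expresses $H^0(Y,\mathcal{O}_Y(\ell(K_Y+X_0)))$ in terms of $H^0(X,\mathcal{O}_X(\ell K_X)\otimes(\text{something involving }A))$. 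Choosing $A$ large enough forces $K_Y+X_0$ to be big, and $\lfloor X_0\rfloor=X_0$ is irreducible, so Conjecture \ref{conj F\_big}$_n$ applies and gives finite generation of $R(Y,X_0)$. Finite generation plus bigness of $K_Y+X_0$ yields $\kappa(Y,K_Y+X_0)=\dim Y\geq 0$, in particular a nonzero section; pushing this down and subtracting the fixed contribution from $X_0$ and the twist by $A$ produces an effective $\mathbb{Q}$-divisor $\mathbb{Q}$-linearly equivalent to $K_X$ (here one uses that $K_X$ is already pseudo-effective to ensure the relevant coefficient computation leaves an honest effective divisor in the class of $K_X$ rather than $K_X$ minus an ample piece).

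The main obstacle I expect is the bookkeeping in the canonical bundle formula for the $\mathbb{P}^1$-bundle: one must choose $A$ so that (i) $(Y,X_0)$ is plt with $\lfloor X_0\rfloor$ irreducible, (ii) $K_Y+X_0$ is big, and (iii) the graded pieces of $R(Y,X_0)$ see $R(X,K_X)$ in a way that a section of a multiple of $K_Y+X_0$ actually descends to a section of a multiple of $K_X$ — not merely of $K_X+(\text{ample})$. Getting (iii) right is the crux: the naive construction tends to produce $D\sim_{\mathbb{Q}}K_X+\varepsilon A'$ for some ample $A'$, and one needs the pseudo-effectivity of $K_X$ together with a limiting/perturbation argument (letting the ratio of $A$ to the other data go to a boundary value) to remove the ample error term. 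If this perturbation is delicate, an alternative is to invoke the reduction to $\kappa\geq 0$ differently: apply Conjecture \ref{conj F\_big}$_n$ to a pair on $X$ built from a multiple of $K_X+\varepsilon H$ together with the exceptional divisor of a resolution making $\lfloor\cdot\rfloor$ irreducible, then run the argument of Lemma \ref{lem3}-type reductions. Everything else — the log-smoothness, the plt-ness, the identification of graded rings — is routine.
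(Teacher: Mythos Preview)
Your $\mathbb P^1$-bundle idea is the right one, but the pair $(Y,X_0)$ with only a section in the boundary does \emph{not} have big log canonical divisor, and enlarging $A$ does not help. With $Y=\mathbb P_X(\mathcal O_X\oplus\mathcal O_X(-A))$, negative section $E$ and positive section $F\sim E+\pi^*A$, one has $K_Y\sim \pi^*K_X-E-F$, hence
\[
K_Y+E\ \sim\ \pi^*K_X-F,\qquad K_Y+F\ \sim\ \pi^*K_X-E.
\]
The first is (pseudo-effective) $-$ (big and nef), hence not big; the second restricts to degree $-1$ on every fibre, hence is not even pseudo-effective. So step (ii) of your outline fails, and the ``perturbation to remove the ample error'' you worry about in (iii) never gets off the ground.

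The missing idea is to enlarge the boundary by something that is trivial on $E$ yet ample elsewhere. Contract $E$ to the vertex of the cone $f\colon Y\to Z$ and take a general ample $\mathbb Q$-divisor $H$ on $Z$ with $\lfloor H\rfloor=0$; set $\Delta_Y=E+f^*H$. Then $(Y,\Delta_Y)$ is plt with $\lfloor\Delta_Y\rfloor=E$ irreducible, $K_Y+\Delta_Y$ is big, \emph{and} $(K_Y+\Delta_Y)|_E=K_E$ exactly, because $f^*H|_E=0$. This dissolves your obstacle (iii): no ample error term appears upon restriction. Now Conjecture \ref{conj F_big}$_n$ gives finite generation, hence a good minimal model $(Y',\Delta_{Y'})$; the remaining point is that a general member of $|m(K_{Y'}+\Delta_{Y'})|$ misses the strict transform of $E$. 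For this one must first check that $E$ is not contracted by the MMP: since $E\simeq X$ has $K_E=K_X$ pseudo-effective, $E$ is not uniruled (BDPP), while any divisor contracted by an MMP is uniruled. Restricting a general member to $E$ then gives an effective $\mathbb Q$-divisor $\sim_{\mathbb Q}K_E=K_X$. Your proposal contains the $\mathbb P^1$-bundle skeleton but none of these three ingredients (the cone pullback $f^*H$, the adjunction $(K_Y+\Delta_Y)|_E=K_E$, and the BDPP argument that $E$ survives the MMP), and without them the argument does not go through.
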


\begin{lem}\label{lem5} 
Conjecture \ref{conj N_sm}$_{\leq n}$ and 
Conjecture \ref{conj A}$_{\leq n-1}$ imply Conjecture \ref{conj N}$_{\leq {n}}$.
\end{lem}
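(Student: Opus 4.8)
The plan is to prove Lemma \ref{lem5}: assuming Conjecture \ref{conj N_sm}$_{\leq n}$ (non-vanishing for smooth varieties in dimension $\leq n$) together with Conjecture \ref{conj A}$_{\leq n-1}$ (abundance in dimension $\leq n-1$), we deduce Conjecture \ref{conj N}$_{\leq n}$. So let $(X,\Delta)$ be a projective log canonical pair with $\dim X \leq n$ and $K_X+\Delta$ pseudo-effective; we must produce an effective $\mathbb{R}$-divisor $D\sim_{\mathbb R}K_X+\Delta$. By taking a dlt blow-up (log resolution), we may assume $(X,\Delta)$ is log smooth, hence in particular $\mathbb Q$-factorial dlt, and it suffices to handle this case since pseudo-effectivity and $\mathbb R$-linear equivalence classes of effective divisors pull back and push forward appropriately.

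First I would run a $(K_X+\Delta)$-MMP with scaling of an ample divisor. The delicate point for log canonical (as opposed to dlt) pairs is that such an MMP is not known to terminate in general; this is exactly why we pass to a dlt model first, and even then termination for pseudo-effective dlt pairs is not known unconditionally. The standard workaround, following the Birkar--Shokurov philosophy used elsewhere in the paper, is to reduce to the case where $K_X+\Delta$ is already nef (or to use the special termination / the structure of the MMP with scaling). Concretely, the key input is that the non-vanishing for smooth varieties, combined with abundance in lower dimension, is enough to establish the general log canonical non-vanishing by the now-standard argument of Birkar (\cite{bir-exiII}): one reduces the non-vanishing conjecture for lc pairs to the non-vanishing conjecture for klt pairs with $\mathbb Q$-boundary via the canonical bundle formula and adjunction to lc centers (here abundance in dimension $\leq n-1$ is used to run the induction on strata and to ensure the moduli part behaves well), and then reduces the klt case to the smooth case $K_X$ by perturbing the boundary and using a resolution.

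More precisely, the steps I would carry out are: (i) reduce to $(X,\Delta)$ log smooth dlt; (ii) if $\lfloor\Delta\rfloor\neq 0$, restrict to a component $S$ of $\lfloor\Delta\rfloor$, apply adjunction $(K_X+\Delta)|_S=K_S+\Delta_S$, use Conjecture \ref{conj A}$_{\leq n-1}$ and \ref{conj N}$_{\leq n-1}$ (which is part of the induction hypothesis bundled into \ref{conj N_sm}$_{\leq n}$ and \ref{conj A}$_{\leq n-1}$ via the lower-dimensional cases) to get an effective divisor on $S$, then lift it using a vanishing/extension theorem (Kawamata--Viehweg type, or the extension results of \cite{dhp-ext}) to conclude; (iii) if $\lfloor\Delta\rfloor=0$, i.e. $(X,\Delta)$ is klt with $\mathbb Q$-boundary, decrease the coefficients of $\Delta$ slightly to write $K_X+\Delta\sim_{\mathbb R}K_X+\Delta'+A$ with $A$ ample and $(X,\Delta')$ klt, then after a further resolution and using that $K_X$ itself is pseudo-effective (or running an MMP to reach that situation) invoke Conjecture \ref{conj N_sm}$_{\leq n}$; (iv) combine.

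The main obstacle will be step (iii), the reduction from the klt (or log smooth) case with a genuine boundary to the purely canonical-divisor case $K_X$: one cannot simply throw away the boundary, so the argument must either use the canonical bundle formula to absorb the boundary into a lower-dimensional base (invoking abundance in dimension $\leq n-1$ to control the discriminant and moduli parts, exactly as in \cite{fg3}) or use an MMP to contract the boundary-positive part. Making this precise while only assuming the three listed conjectures — and in particular not assuming termination of the MMP in dimension $n$ — is the technical heart of the lemma; I expect the proof to invoke \cite[Theorem 1.2]{bir-exiII} or the analogous statement that non-vanishing for smooth varieties plus lower-dimensional abundance yields the full lc non-vanishing, which is precisely the content being claimed here.
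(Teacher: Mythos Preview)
Your proposal is not a proof but an outline with gaps you yourself flag, and those gaps are real. In step (ii) you want to produce an effective divisor on a component $S$ of $\lfloor\Delta\rfloor$ and then ``lift it using a vanishing/extension theorem'' to $X$; but the required surjectivity of the restriction map is precisely the DLT extension conjecture (Conjecture~\ref{conjG} in the appendix), which is \emph{not} among the hypotheses of Lemma~\ref{lem5}. In step (iii) you suggest perturbing the boundary so as to reduce to the case where $K_X$ itself is pseudo-effective and then invoking Conjecture~\ref{conj N_sm}; but from $K_X+\Delta$ pseudo-effective there is no way to arrange $K_X$ pseudo-effective on any model, and ``running an MMP to reach that situation'' begs the question, since the relevant MMP is not known to terminate in dimension $n$ without already knowing non-vanishing. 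The canonical bundle formula route you mention would only help when $\kappa\geq 1$, not for the crucial $\kappa=0$ or ``$\kappa$ unknown'' case.

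The paper's proof takes a different, and much shorter, route: it simply invokes \cite[Theorem~8.8]{dhp-ext} and \cite[Theorem~1.5]{g4}, together with the ACC results of \cite{hmx}. Those references contain exactly the hard reduction you are groping for: Demailly--Hacon--P\u{a}un show (using Nakayama's $\sigma$-decomposition, the numerical dimension, and a delicate extension argument) that non-vanishing for klt pairs follows from non-vanishing for smooth $K_X$ once one has the ACC for log canonical thresholds and lower-dimensional abundance; Gongyo's paper then passes from klt to lc and handles the $\mathbb R$-coefficient case. These are precisely the ``technical heart'' you anticipate but do not supply. So the honest version of your write-up is: this lemma is a black-box citation of \cite{dhp-ext}, \cite{g4}, and \cite{hmx}, and any attempt to reprove it from scratch must reproduce the DHP argument, not the ad hoc adjunction-and-lift scheme you sketch.
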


\begin{lem}\label{lem6} 
Conjecture \ref{conj F_big}$_n$ implies Conjecture \ref{conj M}$_{\leq n-1}$.
\end{lem}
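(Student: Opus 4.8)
The plan is to establish Conjecture \ref{conj M} in every dimension $d\le n-1$ by induction on $d$, carrying Conjecture \ref{conj F_big}$_n$ as a standing hypothesis; the case $d=0$ is trivial and the low-dimensional cases are known unconditionally. So fix $d$ with $1\le d\le n-1$ and assume Conjecture \ref{conj M}$_{d'}$ for all $d'<d$. From this I first extract two consequences in dimensions $\le d-1$. First, Conjecture \ref{conj N}$_{\le d-1}$ and Conjecture \ref{conj A}$_{\le d-1}$ hold: given an lc pair whose log canonical divisor is pseudo-effective (resp.\ nef), pass to a $\mathbb Q$-factorial dlt model, replace it by a good minimal model furnished by the induction hypothesis, and descend the resulting semi-ample (hence effective) representative using the negativity lemma. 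Second, the log MMP with its usual consequences---in particular special termination for MMP's on $d$-dimensional pairs---is available in dimension $\le d-1$ (cf.\ \cite[Corollary 1.6]{bir-exiII} and Remark \ref{2323}).

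Next I run the non-vanishing and abundance machinery up to dimension $d$. By Lemma \ref{lem4}, Conjecture \ref{conj F_big}$_n$ implies Conjecture \ref{conj N_sm}$_{\le n-1}$, and in particular Conjecture \ref{conj N_sm}$_{\le d}$. Feeding this together with Conjecture \ref{conj A}$_{\le d-1}$ into Lemma \ref{lem5} applied in dimension $d$ gives Conjecture \ref{conj N}$_{\le d}$. Feeding Conjecture \ref{conj N}$_{\le d}$ and Conjecture \ref{conj F_big}$_n$ into Lemma \ref{lem3} applied in dimension $d$ then gives Conjecture \ref{conj A}$_{\le d}$. Thus both non-vanishing and abundance are now known in dimension $d$.

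Now let $(X,\Delta)$ be a $\mathbb Q$-factorial projective dlt pair with $\dim X=d$ and $K_X+\Delta$ pseudo-effective, $\Delta$ an $\mathbb R$-divisor. By Conjecture \ref{conj N}$_d$ there is an effective $\mathbb R$-divisor $D\sim_{\mathbb R}K_X+\Delta$, so $(X,\Delta)$ is an effective pair; since the log MMP in dimension $\le d-1$ is at our disposal, Birkar's theorem on the existence of log minimal models \cite{bir-exiII}---which deduces, from the non-vanishing conjecture in dimension $d$ together with the log MMP in dimension $d-1$, that every pseudo-effective $\mathbb Q$-factorial dlt pair of dimension $d$ admits a log minimal model---produces a log minimal model $(X',\Delta')$ of $(X,\Delta)$, so that $K_{X'}+\Delta'$ is nef. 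Applying Conjecture \ref{conj A}$_d$ to $(X',\Delta')$ shows that $K_{X'}+\Delta'$ is semi-ample, i.e.\ $(X',\Delta')$ is a good minimal model of $(X,\Delta)$. This completes the induction and proves Conjecture \ref{conj M}$_{\le n-1}$.

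The genuinely substantial part is the middle step: converting Conjecture \ref{conj F_big}$_n$---a finite-generation statement about plt pairs of log general type---into the full non-vanishing and abundance conjectures \emph{in dimension $d$} is precisely what Lemmas \ref{lem3}, \ref{lem4}, and \ref{lem5} accomplish, and it is there (via the canonical bundle formula and lc-trivial fibrations) that the hypothesis \ref{conj F_big}$_n$ is really used. Granting those lemmas, the final step is essentially formal once one invokes the deep input of \cite{bir-exiII}. The points that need care are that Lemmas \ref{lem3} and \ref{lem5} may legitimately be applied at each intermediate dimension $d\le n-1$ and not merely at $d=n-1$ (so that the induction interlocks correctly), and that the discrepancy between the $\mathbb Q$-divisor hypothesis of Conjecture \ref{conj F_big} and the $\mathbb R$-divisor setting of Conjectures \ref{conj M}, \ref{conj A}, and \ref{conj N} causes no problem---both being dealt with inside Lemmas \ref{lem3}--\ref{lem5}.
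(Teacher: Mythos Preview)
Your proof is correct and follows essentially the same strategy as the paper's: induct on the dimension $d\le n-1$, using Lemmas \ref{lem4}, \ref{lem5}, and \ref{lem3} in that order to bootstrap non-vanishing and abundance up to dimension $d$, and then invoke \cite{bir-exiII} to produce good minimal models (the paper phrases the induction on Conjectures \ref{conj A} and \ref{conj N} rather than on Conjecture \ref{conj M}, but this is cosmetic in view of Remark \ref{rem36}). One small inaccuracy: your parenthetical that Lemmas \ref{lem3}--\ref{lem5} work ``via the canonical bundle formula and lc-trivial fibrations'' is off---Lemma \ref{lem4} uses a $\mathbb P^1$-bundle construction and Lemma \ref{lem3} appeals to \cite{lazic} and \cite{fg3}, not to a canonical bundle formula; and note that the paper is careful to say ``by Lemma \ref{lem3} \emph{and its proof}'' precisely because, as you flag, the statement of Lemma \ref{lem3} is written only for $d=n-1$ while its proof (padding by an abelian variety up to dimension $n-1$) is what makes it applicable at every intermediate $d$.
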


\begin{lem}\label{lem7} 
Assume that Conjecture \ref{conj M}$_{\leq n-1}$ holds. 
Let $(X,\Delta)$ be an $n$-dimensional $\mathbb Q$-factorial 
projective divisorial log terminal pair such that $\Delta$ is a $\mathbb{Q}$-divisor and  
$\kappa(X, K_X+\Delta) \geq 1$. Then $(X, \Delta)$ has a good minimal model. 
In particular, 
Conjecture \ref{conj M}$_{\leq n-1}$ implies Conjecture \ref{conj F}$_{\leq n}$.
\end{lem}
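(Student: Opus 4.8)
The plan is to run the classical Iitaka-fibration reduction — the one underlying Kawamata's proof that abundance in lower dimension together with $\kappa\ge 1$ gives abundance — but in the refined form needed to \emph{produce} a good minimal model, with the canonical bundle formula supplying the descent to the base. First I would reduce to the case in which $(X,\Delta)$ is log smooth: by \cite{bir-exiII} (cf.\ also \cite{fg3}) the existence of a good minimal model is unaffected if we replace $(X,\Delta)$ by a log resolution $(X',\Delta')$ with $\Delta'$ the sum of the strict transform of $\Delta$ and the reduced exceptional divisor. If $K_X+\Delta$ is big we are already done, since a $\mathbb{Q}$-factorial dlt pair with big log canonical divisor has a good minimal model (existence of a log minimal model by \cite{bir-exiII}, goodness because a nef and big log canonical divisor is semi-ample by the base point free theorem). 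So we may assume $1\le k:=\kappa(X,K_X+\Delta)\le n-1$.

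Let $f\colon X\to Z$ be a log smooth model of the Iitaka fibration of $K_X+\Delta$, so that $\dim Z=k$, $f$ has connected fibres, and for a general fibre $F$ the pair $(F,\Delta|_F)$ is $\mathbb{Q}$-factorial dlt of dimension $n-k\le n-1$ with $\kappa(F,K_F+\Delta|_F)=0$. This is where Conjecture \ref{conj M}$_{\leq n-1}$ is used: $(F,\Delta|_F)$ has a good minimal model, and since its Kodaira dimension is $0$ the log canonical divisor becomes $\mathbb{Q}$-linearly trivial on that model. Feeding this into the relative minimal model program, $(X,\Delta)$ has a good minimal model $h\colon(X^m,\Delta^m)\to Z$ over $Z$ — the standard principle that a $\mathbb{Q}$-factorial dlt pair admits a relative good minimal model once its general fibre has a good minimal model (cf.\ \cite{bir-exiII}, \cite{hmx}). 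As the relative Iitaka dimension of $K_{X^m}+\Delta^m$ over $Z$ is $0$, relative semi-ampleness gives $K_{X^m}+\Delta^m\sim_{\mathbb{Q},Z}0$, so after replacing $Z$ by a higher birational model (still of dimension $k\le n-1$) and modifying $X^m$ accordingly we may write $K_{X^m}+\Delta^m\sim_{\mathbb{Q}}h^*P$, where $P$ is a $\mathbb{Q}$-Cartier divisor on $Z$ which is \emph{big}, because $\kappa(Z,P)=\kappa(X,K_X+\Delta)=k=\dim Z$.

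Now $h$ is an lc-trivial fibration, so the canonical bundle formula (\cite{fg3}) gives, after one more modification of the base, $P\sim_{\mathbb{Q}}K_Z+B_Z+M_Z$ with $(Z,B_Z)$ a $\mathbb{Q}$-factorial dlt pair of dimension $\le n-1$ and $M_Z$ a nef $\mathbb{Q}$-divisor (the moduli part). The key point is that the bigness of $P$ allows us to dispose of $M_Z$: for a small rational $\varepsilon>0$ and a decomposition $P\sim_{\mathbb{Q}}A'+E'$ with $A'$ ample and $E'\ge 0$, one has
$$P\sim_{\mathbb{Q}}K_Z+(1-\varepsilon)B_Z+\varepsilon E'+\bigl((1-\varepsilon)M_Z+\varepsilon A'\bigr),$$
and $(1-\varepsilon)M_Z+\varepsilon A'$ is ample, hence $\mathbb{Q}$-linearly equivalent to a general effective divisor; thus $P\sim_{\mathbb{Q}}K_Z+B'$ with $B'\ge 0$ big and $(Z,B')$ klt of dimension $\le n-1$. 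By \cite{bchm} the pair $(Z,B')$ has a good minimal model; running the associated $P$-minimal model program on $Z$ and lifting it to $X^m$ (possible because $K_{X^m}+\Delta^m$ is trivial over $Z$) produces a birational contraction $X\dashrightarrow X'$ consisting of steps of the $(K_X+\Delta)$-minimal model program along which $K_{X'}+\Delta'$ becomes the pullback of a semi-ample divisor, hence semi-ample. Therefore $(X',\Delta')$ is a good minimal model of $(X,\Delta)$.

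For the last assertion, let $(X,\Delta)$ be an at most $n$-dimensional projective log canonical pair with $\Delta$ a $\mathbb{Q}$-divisor; replacing it by a $\mathbb{Q}$-factorial dlt model does not change the log canonical ring. If $\kappa(X,K_X+\Delta)=-\infty$ the ring is trivially finitely generated; if $\kappa=0$ it is a domain whose graded pieces are all at most one-dimensional, hence finitely generated because the monoid of degrees in which it is nonzero is finitely generated; and if $\kappa\ge 1$ the pair has a good minimal model by the first part (by induction on $\dim X$), so its log canonical ring is the section ring of a semi-ample divisor and is finitely generated. This gives Conjecture \ref{conj F}$_{\leq n}$. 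The step I expect to be the real obstacle is the construction of the relative good minimal model over $Z$, followed by the passage from the lc-trivial structure to an honest boundary $B'$ on $Z$; note that Conjecture \ref{conj M}$_{\leq n-1}$ is used only on the fibre, the base being handled unconditionally, the point being that bigness of $P$ absorbs the nef moduli part $M_Z$ into an ample perturbation, so that $b$-semi-ampleness of $M_Z$ — which is not known — is never needed.
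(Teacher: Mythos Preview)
Your approach is the classical Fujino--Mori reduction via the Iitaka fibration and the canonical bundle formula, but the paper proves the lemma by a much shorter route: by \cite{bir-exiII} (using that $\kappa\ge 0$) one replaces $(X,\Delta)$ by a log minimal model so that $K_X+\Delta$ is nef; then \cite[Proposition 3.1]{fukuda-num-eff} (Kawamata's inductive argument, fed Conjecture~\ref{conj M}$_{\le n-1}$ on the fibres of the Iitaka fibration) gives $\kappa(X,K_X+\Delta)=\nu(X,K_X+\Delta)$; finally \cite[Theorem 4.6]{fg3} turns abundance plus nefness into semi-ampleness. No canonical bundle formula and no MMP on the base are needed.

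Your argument has a genuine gap at the perturbation step. The displayed identity
\[
P\ \sim_{\mathbb Q}\ K_Z+(1-\varepsilon)B_Z+\varepsilon E'+\bigl((1-\varepsilon)M_Z+\varepsilon A'\bigr)
\]
is false: expanding the right-hand side using $B_Z+M_Z\sim_{\mathbb Q}P-K_Z$ and $A'+E'\sim_{\mathbb Q}P$ gives $P+\varepsilon K_Z$, not $P$. Any correct rewriting of $P$ as $K_Z+(\text{boundary})$ must keep the full $B_Z$, and this is the real obstacle: for a dlt pair $(X,\Delta)$ the discriminant $B_Z$ can have components of coefficient~$1$ (coming from vertical lc centres), so $(Z,B_Z)$ is only lc, and absorbing the nef moduli part $M_Z$ into an ample perturbation \emph{adds} to the boundary rather than reducing it---you never reach a klt pair on which to invoke \cite{bchm}. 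This is precisely the failure of the Fujino--Mori reduction for non-klt pairs discussed in Remark~\ref{fm-reduction}, where it is noted that the argument of \cite{fm} goes through only under the extra hypothesis that every lc centre of $(X,\Delta)$ dominates $Z$. Your closing remark anticipates a difficulty with $M_Z$, but the actual obstruction lies in $B_Z$; the bigness of $P$ does not help here. The route you sketch can be completed with the later theory of generalised pairs (Birkar--Zhang), which lets one run a $(K_Z+B_Z+M_Z)$-MMP directly, but that machinery is not available among the references you cite and postdates the paper.
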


Let us start the proof of the lemmas. 

\begin{proof}[Proof of Lemma \ref{lem3}] 
By taking a dlt blow-up and using 
Shokurov polytope (cf.~\cite[Proposition 3.2.~(3)]{bir-exiII} 
and \cite[Theorem 18.2]{fujino-funda}), we may assume that 
$(X,\Delta)$ is a $\mathbb{Q}$-factorial dlt pair and that 
$\Delta$ is a $\mathbb{Q}$-divisor. Moreover by taking a product with an Abelian variety 
we may further 
assume $\dim X=n-1$.  The abundance conjecture 
follows from \cite[Theorem A.6]{lazic} and 
Conjecture \ref{conj F_big}$_n$ 
when $(X,\Delta)$ is klt. 
For a log canonical pair $(X,\Delta)$ with nef $K_X+\Delta$, its semi-ampleness 
follows from \cite[Theorem 5.5]{fg3} by Conjecture \ref{conj N}$_{\leq n-1}$ and 
the abundance theorem for klt pairs established above.
\end{proof}

\begin{proof}[Proof of Lemma \ref{lem4}] We may assume 
$\dim X=n-1$ by taking a product with an Abelian variety.
Let $X \subset \mathbb{P}^N $ be a projectively 
normal embedding. We consider the $\mathbb P^1$-bundle 
$$p: Y:=\mathbb{P}_X(\mathcal{O}_X\oplus \mathcal{O}_X(-1)) \to X.$$
Let $f:Y \to Z$ be the birational contraction of the negative section $E$ on $Y$ and $H$ a 
general sufficiently ample $\mathbb{Q}$-divisor on $Z$ such that $\lfloor H \rfloor=0$ and 
$K_Y+E+f^*H$ is big. Set $\Delta_Y= E+f^*H$. 
Without loss of generality, we may assume 
that $(Y, \Delta_Y)$ is a canonical pair 
with 
$\lfloor  E+f^*H \rfloor=E$. By the assumption 
(Conjecture \ref{conj F_big}$_n$), $R(Y, \Delta_Y)$ is finitely generated. 
Then $(Y^\dag, \Delta_{Y^\dag})$, where $Y^\dag=\Proj R(Y, \Delta_Y)$ and 
$\Delta_{Y^\dag}$ is the pushforward of $\Delta_Y$ on $Y^\dag$, is the log canonical 
model of $(Y, \Delta_Y)$ (see, for example, \cite[Theorem 0-3-12]{kamama}). 
By taking a suitable dlt blow-up of $(Y^\dag, \Delta_{Y^\dag})$, we obtain 
a good minimal model $(Y', \Delta_{Y'})$ of $(Y, \Delta_Y)$ (cf.~\cite{bir-I}). 
See also \cite[Theorem 3.7]{bir-exilcflip}. 
Note that $\varphi: Y\dashrightarrow Y'$ extracts no divisors since $(Y, \Delta_Y)$ is plt. 
Moreover, we may assume that this birational map
$$\varphi:Y \dashrightarrow Y'$$
is a composition of 
$(K_Y+\Delta_Y)$-flips and $(K_Y+\Delta_Y)$-divisorial 
contractions by \cite[Corollary 2.9]{hacon-xu-lc-closure}. 
We note that 
$E$ is not contracted by $\varphi$. Indeed,  if $E$ is contracted, then $E$ is uniruled. 
However,  by \cite[0.3 Corollary]{bdpp}, 
$E$ is not uniruled since $K_E$ 
is pseudo-effective. Note that $E\simeq X$. 
Now we see that $K_{Y'}+\Delta_{Y'}$ is semi-ample 
by the finite generation of 
$R(Y', \Delta_{Y'})$, where $\Delta_{Y'}= \varphi_*\Delta_Y$. 
Take a general member $D' \in |m(K_{Y'}+\Delta_{Y'})|$ 
such that $\varphi_* E\not \subset \Supp D'$ for some sufficiently 
divisible positive integer $m$. 
Then $D'$ induces some effective 
$\mathbb{Q}$-divisor $D$ such that 
$D \sim_{\mathbb{Q}}K_Y+\Delta_Y$ and $E\not \subset \Supp D$. 
Thus we can see $\kappa(X, K_X)=\kappa(E, K_E) \geq 0$ since
 $$K_E = (K_Y+\Delta_Y)|_{E} \sim_{\mathbb{Q}} D|_{E} \geq 0.$$
 Therefore, we obtain Conjecture \ref{conj N_sm}$_{\leq n-1}$. 
\end{proof}

The following proof is pointed out by the referee:

\begin{proof}[Alternative proof of Lemma \ref{lem4}] 
Let $Y$, $Z$ and $E$ be as in the above proof of Lemma \ref{lem4} 
and let $A$ be an ample Cartier divisor such that $\mathcal{O}_X(1)\simeq 
\mathcal{O}_X(A)$. 
Since $K_X$ is pseudo-effective, $K_X+A$ is big. 
Let $H'$  be a hyperplane on $Z\subset \mathbb P^{N+1}$. 
Then we can easily check that 
$$
K_Y+E+2f^*H' \sim E +p^*(K_X+A). 
$$
Let $\widetilde H$ be a $\mathbb Q$-Cartier $\mathbb Q$-divisor 
on $Z$ such that $2\widetilde H$ is a general 
member of $|4H'|$. 
Then $(Y, E+f^*\widetilde H)$ is canonical, $\lfloor E+f^*\widetilde H\rfloor=E$,  
and 
$$
K_Y+E+f^*\widetilde H \sim_{\mathbb Q} E +p^*(K_X+A). 
$$
It is easy to see that $K_Y+E+f^*\widetilde H$ is big. 
Therefore, 
$R(Y, E+f^*\widetilde H)$ is finitely generated 
by the assumption (Conjecture \ref{conj F_big}$_n$). 
Since  $\mathcal{O}_Y(E +p^*(K_X+A))$ is the tautological 
line bundle associated to the $\mathbb P^1$-bundle 
$\mathbb P_X(\mathcal{O}_X(K_X)\oplus 
\mathcal{O}_X(K_X+A))\to X$, the finite generation of $R(Y, E+f^*\widetilde H)$ 
is equivalent to that of  
$$R(X;K_X, K_X+A):=
\bigoplus _{m_1, m_2\geq 0}H^0(X, \mathcal O_X(m_1K_X+
m_2(K_X+A))). 
$$
By \cite[Theorem 3]{cl}, this implies that $\kappa (X, K_X)\geq 0$ since 
$K_X$ is pseudo-effective and $K_X+A$ is big. 
\end{proof} 

\begin{proof}[Proof of Lemma \ref{lem5}] This follows from 
\cite[Theorem 8.8]{dhp-ext} and \cite[Theorem 1.5]{g4}. Note 
that we can use the ACC theorems in \cite{hmx}.
\end{proof}

\begin{proof}[Proof of Lemma \ref{lem6}] 
By \cite{bir-exiII}, it is enough to show 
Conjecture \ref{conj A}$_{\leq {n-1}}$ and 
Conjecture \ref{conj N}$_{\leq {n-1}}$. We show these 
conjectures by induction on the dimension. 
Now we assume that 
Conjecture \ref{conj A}$_{\leq {d-1}}$ and 
Conjecture \ref{conj N}$_{\leq {d-1}}$ hold for $d<n$. 
Note that Conjecture \ref{conj N_sm}$_{\leq n-1}$ holds 
by Lemma \ref{lem4}. By Lemma \ref{lem5}, 
Conjecture \ref{conj N}$_{\leq d}$ holds. On the other hand, 
by Lemma \ref{lem3} and its proof, Conjecture \ref{conj A}$_{\leq {d}}$ holds. 
Thus we see that Conjecture \ref{conj A}$_{\leq {n-1}}$ and 
Conjecture \ref{conj N}$_{\leq {n-1}}$ hold.  
\end{proof}

\begin{rem}\label{rem36} 
By \cite{bir-exiII}, Conjecture \ref{conj A}$_{\leq n}$ and Conjecture 
\ref{conj N}$_{\leq n}$ imply Conjecture \ref{conj M}$_{\leq n}$. 
This fact was used in the proof of Lemma \ref{lem6}. On the other hand, it is easy 
to see that Conjecture \ref{conj M}$_{\leq n}$ implies Conjecture  
\ref{conj A}$_{\leq n}$ and Conjecture \ref{conj N}$_{\leq n}$ by using dlt blow-ups. 
See also  Remark \ref{2323}. 
\end{rem}

\begin{proof}[Proof of Lemma \ref{lem7}] By 
\cite{bir-exiII}, we may assume that $K_X+\Delta$ is nef. 
By 
\cite[Proposition 3.1]{fukuda-num-eff} (cf.~\cite[Theorem 7.3]{kawamata_pluri}), 
we obtain that $K_X+\Delta$ is 
abundant, i.e.~$\kappa(X, K_X+\Delta)=\nu(X, K_X+\Delta)$, 
where $\nu(\bullet)$ is the numerical dimension (see, for example, 
\cite[Lemma 6-1-1]{kamama}). Thus  
we see that $K_X+\Delta$ is semi-ample by \cite[Theorem 4.6]{fg3}. 
\end{proof}

Now we give the proof of Theorem \ref{main theorem}. 

\begin{proof}[Proof of Theorem \ref{main theorem}] It is obvious that 
Conjecture \ref{conj F}$_{n}$ implies Conjecture \ref{conj F_big}$_{n}$. 
By Lemma \ref{lem6}, Conjecture \ref{conj F_big}$_{n}$ 
implies Conjecture \ref{conj M}$_{\leq n-1}$. 
By Lemma \ref{lem7}, Conjecture \ref{conj M}$_{\leq n-1}$ implies 
Conjecture \ref{conj F}$_{\leq n}$. Thus we finish the proof 
of Theorem \ref{main theorem}.
\end{proof}

Finally, we discuss the corollaries. 
Corollary \ref{cor0} is contained in Theorem \ref{main theorem}.  
Corollary \ref{cor1} is a direct consequence of Lemma \ref{lem6} and Lemma \ref{lem7}.
The proof of \cite[Theorem 1.1]{fujino-f.g.} works for Corollary \ref{cor2}. 
Note that \cite{fujino-f.g.} depends on \cite{bir-I} and \cite{fujino-ab}. 
Now we can use more powerful results in \cite{bir-exiII} and \cite{fg3}. 

We close this section with a remark on \cite[Theorem 5.2]{fm}. 

\begin{rem}\label{fm-reduction}
Let $(X, \Delta)$ be a projective log canonical pair such that 
$\Delta$ is a $\mathbb Q$-divisor. 
Let $\Phi:X\dashrightarrow Z$ be the Iitaka fibration with 
respect to $K_X+\Delta$. 
By taking a suitable resolution, 
we assume that $\Phi$ is a morphism, $X$ is smooth, and $\Supp\Delta$ is a simple 
normal crossing divisor on $X$. 
Suppose that every log canonical center of $(X, \Delta)$ is dominant 
onto $Z$. 
Then $R(X, \Delta)$ is finitely generated. 

By using a generalization of the semi-positivity theorem 
(see \cite[Theorem 3.9]{fujino-high} and \cite[Theorem 3.6]{fg-moduli}), 
we can formulate a canonical bundle formula 
for log canonical pairs as in \cite[Section 4]{fm}. 
By using the canonical bundle formula for log canonical 
pairs, the proof of \cite[Theorem 5.2]{fm} 
works for the above setting. 
We leave the details as exercises for the reader. 
Note that the finite generation of the log canonical rings for projective klt pairs holds 
by \cite{bchm}. 
\end{rem}

\section{Appendix}\label{sec4}

In this appendix, we discuss Conjecture \ref{conj M}. 
The results in this appendix are essentially contained in \cite[Section 5]{fg3}. 

Let us recall the following conjecture (see \cite[Conjecture 1.3]{dhp-ext} 
and \cite[Conjecture 1.10]{fg3}). 

\begin{theorema}[DLT extension conjecture] \label{conjG}
Let $(X, \Delta)$ be a projective divisorial log terminal pair such that 
$\Delta$ is a $\mathbb Q$-divisor, 
$\lfloor \Delta\rfloor=S$, $K_X+\Delta$ is nef, and $K_X+\Delta\sim _{\mathbb Q}D\geq 0$ where 
$S\subset \Supp D$. 
Then the restriction map 
$$
H^0(X, \mathcal O_X(m(K_X+\Delta)))\to H^0(S, \mathcal O_S(m(K_X+\Delta)))
$$ 
is surjective for all sufficiently divisible integers $m\geq 2$. 
\end{theorema}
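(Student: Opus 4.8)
This is one of the central open conjectures of the area, so rather than a complete argument I will sketch a route conditional on Conjecture \ref{conj M} in dimensions $\leq\dim X$ --- equivalently, by Remark \ref{rem36}, on Conjecture \ref{conj A} and Conjecture \ref{conj N} in those dimensions --- which is the form in which the extension statement interacts with the circle of conjectures above; the results of \cite[Section 5]{fg3} are of this nature. First I would reduce to the case in which $(X,\Delta)$ is $\mathbb Q$-factorial dlt: take a dlt blow-up $\mu\colon(\widetilde X,\widetilde\Delta)\to(X,\Delta)$ with $K_{\widetilde X}+\widetilde\Delta=\mu^*(K_X+\Delta)$ and a component of $\lfloor\widetilde\Delta\rfloor$ dominating $S$; since $\mu$ is an isomorphism over the generic point of $S$ and is $(K_X+\Delta)$-trivial, the relevant spaces of sections upstairs and downstairs coincide for all divisible $m$, both on the total space and on the distinguished boundary component.

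Next I would invoke Conjecture \ref{conj M}: since $K_X+\Delta\sim_{\mathbb Q}D\ge 0$ is pseudo-effective and already nef, the conjecture forces $K_X+\Delta$ to be semi-ample; let $g\colon X\to W$ be its ample model. By abundance in dimension $\dim X-1$ (which again follows from Conjecture \ref{conj M} in the lower dimensions, cf.~Remark \ref{rem36} and Lemma \ref{lem3}) the adjoint divisor $(K_X+\Delta)|_S=K_S+\Delta_S$ is semi-ample as well, compatibly with $g$. One then wants to lift a given section on $S$ to $X$ for $m$ sufficiently divisible (in particular $m\ge 2$): the hypothesis $S\subset\Supp D$ keeps $S$ uncontracted by $g$ and aligns the fibration $S\to g(S)$ with $g$, and the remaining task is to descend the section along $g|_S$ and then lift it along $g$. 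This last step must be carried out with care, but in outline it follows \cite[Section 5]{fg3}.

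The main obstacle is twofold. First, the route above is only conditional; unconditionally one expects to prove the statement by the Ohsawa--Takegoshi-type extension theorem of \cite{dhp-ext}, using a metric on $K_X+\Delta$ adapted to $D$, and the real difficulty there is verifying the positivity hypotheses of that theorem along $S$ without already knowing abundance --- so the problem is essentially as deep as Conjecture \ref{conj A} itself, which is why it is listed here alongside, rather than as a consequence of, the other conjectures. Second, and more technically, the reducibility and possible non-normality of $S=\lfloor\Delta\rfloor$ in the dlt setting --- in contrast to the plt setting, where $S$ is irreducible and normal and $L^2$-extension applies directly --- force one to treat each component of $S$ separately through the dlt modification and through $g$ and to control the non-normal locus of $g(S)$, so that the descent of sections used above really is valid; handling the semi-normal crossing of the components of $\lfloor\Delta\rfloor$ is the crux of why the dlt version is strictly harder than the plt version.
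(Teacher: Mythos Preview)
This statement is a \emph{conjecture} in the paper; no proof is given. The only thing the paper says towards a proof is the Remark following Theorem~\ref{thma1}: Conjecture~\ref{conjG} holds whenever $K_X+\Delta$ is semi-ample, by \cite[Proposition~5.12]{fg3}, and hence it follows from Conjecture~\ref{conj A}. Your conditional sketch --- assume Conjecture~\ref{conj M} (equivalently~\ref{conj A}), obtain semi-ampleness of $K_X+\Delta$, and lift sections through the induced contraction $g\colon X\to W$ --- is exactly this remark spelled out, so you and the paper agree in substance.

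One step in your sketch is wrong, though it does not affect the conclusion. You claim that the hypothesis $S\subset\Supp D$ ``keeps $S$ uncontracted by $g$''; this is false. Take $X=E\times\mathbb P^1$ with $E$ an elliptic curve, $\Delta=F_0+F_1+F_\infty$ the sum of three fibres of the second projection, and $D=\tfrac13\Delta$: then $(X,\Delta)$ is dlt, $K_X+\Delta\sim F$ is nef and semi-ample with $g$ the projection to $\mathbb P^1$, $S=\lfloor\Delta\rfloor\subset\Supp D$, and every component of $S$ is contracted to a point. In fact, once $K_X+\Delta$ is semi-ample the hypothesis $S\subset\Supp D$ plays no role, and you do not need lower-dimensional abundance on $S$ either, since $(K_X+\Delta)|_S$ is automatically semi-ample as the restriction of a semi-ample divisor. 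The lifting for sufficiently divisible $m$ comes directly from writing $m(K_X+\Delta)=g^*L$ with $L$ very ample on $W$, identifying sections on $S$ with sections on $g(S)$, and using Serre vanishing on $W$; this is the content of \cite[Proposition~5.12]{fg3} that the paper cites.
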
 

\begin{thm}\label{thma1} 
Conjecture \ref{conj N_sm}$_{\leq n}$ and Conjecture \ref{conjG}$_{\leq n}$ imply Conjecture \ref{conj M}$_{\leq n}$.  
\end{thm}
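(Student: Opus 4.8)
The plan is to prove Conjecture~\ref{conj M}$_{\leq n}$ by induction on dimension, reducing the general pseudo-effective case to the case $\kappa(X,K_X+\Delta)\geq 0$ via the non-vanishing input, and then running a minimal model program that terminates with a semi-ample log canonical divisor thanks to the DLT extension conjecture. First I would fix a $\mathbb Q$-factorial projective dlt pair $(X,\Delta)$ of dimension $\leq n$ with $K_X+\Delta$ pseudo-effective; by a standard perturbation using Shokurov polytopes and the fact that good minimal models for $\mathbb R$-boundaries follow from the $\mathbb Q$-boundary case (cf.~\cite[Section 5]{fg3}), I may assume $\Delta$ is a $\mathbb Q$-divisor. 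Passing to a log smooth model and using \cite[0.3~Corollary]{bdpp} together with Conjecture~\ref{conj N_sm}$_{\leq n}$ on a suitable smooth birational model — more precisely, applying the non-vanishing for smooth varieties to control the non-klt part as in \cite[Section 5]{fg3} — I would upgrade pseudo-effectivity of $K_X+\Delta$ to $\kappa(X,K_X+\Delta)\geq 0$, i.e.~$K_X+\Delta\sim_{\mathbb Q}D\geq 0$ for some effective $\mathbb Q$-divisor $D$.

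Next I would run a $(K_X+\Delta)$-MMP with scaling of an ample divisor. By \cite{bir-exiII} the only obstruction to this MMP terminating with a minimal model is the existence of a Mori fiber space or non-termination; since $K_X+\Delta$ is pseudo-effective, no Mori fiber space occurs, and using the existence results for log minimal models in \cite{bir-exiII} (valid once we know abundance/non-vanishing in lower dimension, which is available by the inductive hypothesis feeding into Remark~\ref{rem36}) we arrive at a $\mathbb Q$-factorial dlt pair $(Y,\Delta_Y)$ with $K_Y+\Delta_Y$ nef. It remains to prove $K_Y+\Delta_Y$ is semi-ample. Here I would invoke the strategy of \cite[Theorem~5.9]{fg3} (or its analogue): by the inductive hypothesis, abundance holds in dimension $\leq n-1$, so the restriction of $K_Y+\Delta_Y$ to every lc center of positive codimension is semi-ample; the DLT extension conjecture, Conjecture~\ref{conjG}$_{\leq n}$, applied with $S=\lfloor\Delta_Y\rfloor$ and the effective divisor $D_Y\sim_{\mathbb Q}K_Y+\Delta_Y$ produced above, lets one lift sections from $S$ to $X$, and an induction on the number of components of $\lfloor\Delta_Y\rfloor$ together with the klt abundance case (which follows from the non-vanishing for smooth varieties via \cite[Theorem~A.6]{lazic}-type arguments, or directly from \cite{bchm} when $K+\Delta$ is big on the klt locus) yields semi-ampleness of $K_Y+\Delta_Y$.

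The main obstacle is the gluing step at the boundary: even granting Conjecture~\ref{conjG}$_{\leq n}$, one must check that the semi-ample linear systems obtained on the various components of $\lfloor\Delta_Y\rfloor$ are compatible along their intersections so that they patch to a genuine morphism on a neighborhood of $\lfloor\Delta_Y\rfloor$, and then extend this to all of $Y$ using that $K_Y+\Delta_Y$ is already semi-ample away from $\lfloor\Delta_Y\rfloor$ (by the klt case). This is precisely the technical heart of \cite[Section 5]{fg3}, and I would organize the argument so that the extension conjecture is applied at each inductive stage to a dlt pair whose boundary has one fewer component, keeping track of the hypothesis $S\subset\Supp D$ throughout. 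Finally, once $K_Y+\Delta_Y$ is semi-ample, $(Y,\Delta_Y)$ is by definition a good minimal model of $(X,\Delta)$, completing the induction and the proof of Conjecture~\ref{conj M}$_{\leq n}$.
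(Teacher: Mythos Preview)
Your proposal follows essentially the same architecture as the paper's proof: induction on dimension to get Conjecture~\ref{conj M}$_{\leq n-1}$ (hence Conjecture~\ref{conj A}$_{\leq n-1}$), upgrade Conjecture~\ref{conj N_sm}$_{\leq n}$ to full non-vanishing Conjecture~\ref{conj N}$_{\leq n}$, then feed Conjecture~\ref{conj N}$_{\leq n}$ and Conjecture~\ref{conjG}$_{\leq n}$ into the machinery of \cite[Section~5]{fg3}. The paper does this in three clean citations (Remark~\ref{rem36}, Lemma~\ref{lem5}, and \cite[Theorem~5.9 and Corollary~5.10]{fg3}), whereas you attempt to unpack the last of these. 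That is a reasonable thing to do, and your description of the gluing step along $\lfloor\Delta_Y\rfloor$ is accurate in spirit.

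There is, however, one concrete error in your unpacking. You assert that the klt abundance case ``follows from the non-vanishing for smooth varieties via \cite[Theorem~A.6]{lazic}-type arguments''. This is not right: \cite[Theorem~A.6]{lazic} takes as input \emph{finite generation} of adjoint rings (essentially Conjecture~\ref{conj F_big}), not non-vanishing, and that is exactly how it is used in the proof of Lemma~\ref{lem3}. Under the hypotheses of Theorem~\ref{thma1} you do not have Conjecture~\ref{conj F_big}$_n$ available, so Lazi\'c's result does not apply. The klt $\kappa=0$ case is instead handled inside \cite{dhp-ext} and \cite[Section~5]{fg3} via extension theorems and reductions of the type you allude to (and in particular Conjecture~\ref{conjG} is genuinely needed even in passing through the klt case, after suitable modifications that introduce boundary components). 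Since you ultimately defer to \cite[Section~5]{fg3} anyway, this does not break the overall argument, but you should remove the Lazi\'c reference and be explicit that the klt step is part of the \cite{fg3} black box rather than something you obtain independently.
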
 

\begin{proof}
By induction on the dimension, we may assume that 
Conjecture \ref{conj M}$_{\leq n-1}$ holds true. Therefore, we obtain Conjecture \ref{conj A}$_{\leq n-1}$ 
(cf.~Remark \ref{rem36}). 
Lemma \ref{lem5}, Conjecture \ref{conj N_sm}$_{\leq n}$, and Conjecture \ref{conj A}$_{\leq n-1}$ imply Conjecture \ref{conj N}$_{\leq n}$. 
Finally, by \cite[Theorem 5.9 and Corollary 5.10]{fg3}, Conjecture \ref{conj N}$_{\leq n}$ 
and Conjecture \ref{conjG}$_{\leq n}$ imply Conjecture \ref{conj M}$_{\leq n}$. 
\end{proof}

We note that, for Theorem \ref{thma1}, it is sufficient to prove Conjecture \ref{conjG} under 
the extra assumptions:~$X$ is $\mathbb Q$-factorial, 
$\kappa (X, K_X+\Delta)=0$, 
and $\Supp D\subset \Supp \Delta$. For the details, see the 
proof of \cite[Theorem 5.9]{fg3}. 

\begin{rem}
Conjecture \ref{conjG} holds true if $K_X+\Delta$ is semi-ample 
(see \cite[Proposition 5.12]{fg3}). 
Therefore, Conjecture \ref{conjG} follows from Conjecture \ref{conj A}. 
\end{rem}

\begin{rem}
In \cite[Conjecture 1.3]{dhp-ext}, it is assumed that $$S\subset \Supp D\subset \Supp \Delta$$ 
in Conjecture \ref{conjG}. 
\end{rem}

\begin{theorema}[Abundance conjecture for klt pairs with $\kappa =0$]\label{conjH} 
Let $(X, \Delta)$ be a projective kawamata log terminal pair such that 
$\Delta$ is a $\mathbb Q$-divisor with $\kappa (X, K_X+\Delta)=0$. 
Then $\kappa _{\sigma}(X, K_X+\Delta)=0$, where 
$\kappa _{\sigma}$ denotes Nakayama's numerical dimension.  
\end{theorema}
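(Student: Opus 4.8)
The plan is to deduce this from the abundance-type machinery already assembled in the paper, together with the structural theory of pseudo-effective divisors with vanishing Iitaka dimension. The statement is equivalent to saying that a klt pair $(X,\Delta)$ with $\kappa(X,K_X+\Delta)=0$ already has $K_X+\Delta$ numerically trivial up to the $\sigma$-decomposition, i.e.\ that $K_X+\Delta\sim_{\mathbb Q}D\geq 0$ with $N_\sigma(D)=D$; equivalently, after running a suitable minimal model program the class becomes numerically trivial. First I would reduce to the $\mathbb Q$-factorial dlt situation by a dlt blow-up, which preserves both $\kappa$ and $\kappa_\sigma$ of the pullback, so there is no loss of generality. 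Then, assuming Conjecture \ref{conj M}$_{\leq n-1}$ (which by Remark \ref{rem36} gives Conjecture \ref{conj A}$_{\leq n-1}$), together with Conjecture \ref{conj N_sm}$_{\leq n}$ via Lemma \ref{lem5}, one obtains Conjecture \ref{conj N}$_{\leq n}$: so $K_X+\Delta\sim_{\mathbb R}D\geq 0$, and since everything is a $\mathbb Q$-divisor we may take $D$ to be a $\mathbb Q$-divisor with $D\sim_{\mathbb Q}K_X+\Delta$.

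The core step is then the following: since $\kappa(X,K_X+\Delta)=0$, the divisor $D$ above is (up to $\mathbb Q$-linear equivalence) the \emph{unique} effective representative, and in particular $N_\sigma(K_X+\Delta)=D$, i.e.\ the negative part of Nakayama's $\sigma$-decomposition absorbs all of $D$. This is a standard fact about divisors of Iitaka dimension zero: if $P_\sigma(D)=D-N_\sigma(D)$ were a nonzero pseudo-effective class with some section, it would force $\kappa\geq 1$ after the usual argument adding a small ample and using Nakayama's results (see \cite{bdpp} and Nakayama's monograph, which the paper cites through \cite{bdpp}). From $N_\sigma(K_X+\Delta)=D$ it follows by definition that $\kappa_\sigma(X,K_X+\Delta)=\kappa_\sigma(X,P_\sigma(K_X+\Delta))$ and the latter is the numerical dimension of a class whose $\sigma$-decomposition has trivial negative part but which is $\mathbb Q$-linearly equivalent to zero modulo that negative part, hence numerically this is handled by running the $(K_X+\Delta)$-MMP.

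Concretely, I would run the $(K_X+\Delta)$-MMP; under Conjecture \ref{conj M}$_{\leq n-1}$ and the lower-dimensional abundance just established, together with existence of minimal models for pseudo-effective dlt pairs from \cite{bir-exiII}, this terminates with a good minimal model $(Y,\Delta_Y)$, on which $K_Y+\Delta_Y$ is semi-ample with $\kappa(Y,K_Y+\Delta_Y)=\kappa(X,K_X+\Delta)=0$; a semi-ample divisor of Iitaka dimension zero is torsion, hence numerically trivial, so $\nu(Y,K_Y+\Delta_Y)=0$. Since $\kappa_\sigma$ is a birational invariant that does not increase under the steps of the MMP (negativity lemma plus Nakayama's behaviour of $\sigma$-decompositions under small modifications and divisorial contractions), we get $\kappa_\sigma(X,K_X+\Delta)=\kappa_\sigma(Y,K_Y+\Delta_Y)=\nu(Y,K_Y+\Delta_Y)=0$, as desired. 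The main obstacle is that this argument is essentially circular if one is not careful about which conjectures are being assumed: the honest content is that Conjecture \ref{conjH}$_{\leq n}$ is \emph{equivalent} to (the klt, $\kappa=0$ case of) the good minimal model conjecture in that dimension, and the real work — already done in \cite[Section 5]{fg3} and invoked through Theorem \ref{thma1} — is the reduction of the general good minimal model conjecture to precisely this $\kappa_\sigma=0$ statement via the DLT extension conjecture. So the expected difficulty is not a new estimate but rather stating cleanly that Conjecture \ref{conjH} is the genuinely remaining piece, which is why it is recorded here as a conjecture rather than proved.
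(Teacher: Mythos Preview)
The statement labeled \ref{conjH} is recorded in the paper as a \emph{Conjecture} (the environment \texttt{theorema} is declared as ``Conjecture'' in the preamble), and the paper offers no proof of it; it is introduced precisely as the residual open input needed, together with Conjecture~\ref{conj N_sm}, to obtain Conjecture~\ref{conj M} (Theorem~\ref{thma2}). So there is no ``paper's own proof'' to compare against, and your final paragraph correctly lands on this point.

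That said, the earlier part of your proposal contains a genuine error, not merely circularity. Your ``core step'' asserts that if $\kappa(X,K_X+\Delta)=0$ and $K_X+\Delta\sim_{\mathbb Q}D\geq 0$, then $N_\sigma(K_X+\Delta)=D$, and you call this ``a standard fact about divisors of Iitaka dimension zero''. It is not. One only has the trivial inequality $N_\sigma(K_X+\Delta)\leq D$ componentwise; the reverse inequality is exactly what is at stake. Your justification (``if $P_\sigma(D)$ were a nonzero pseudo-effective class with some section, it would force $\kappa\geq 1$'') fails because $P_\sigma(D)$ is only known to lie in the closure of the movable cone and need not have any effective representative at all; nothing prevents $P_\sigma(D)$ from being a nonzero class with $\kappa=-\infty$. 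Indeed, on a minimal model with $K_Y+\Delta_Y$ nef one has $N_\sigma=0$, so $P_\sigma=K_Y+\Delta_Y$, and the assertion $P_\sigma\equiv 0$ is then literally the abundance statement $\nu=0$ you are trying to prove.

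Your second approach, running the MMP and invoking \cite{bir-exiII} to reach a \emph{good} minimal model, is circular in the way you yourself diagnose: \cite{bir-exiII} produces a minimal model from Conjecture~\ref{conj N}$_{\leq n}$, but its \emph{goodness} in dimension $n$ requires abundance in dimension $n$, which for klt pairs with $\kappa=0$ is equivalent (via the Remark following Conjecture~\ref{conjH}) to Conjecture~\ref{conjH} itself. So neither branch of the argument furnishes an actual proof; your concluding sentence is the correct takeaway.
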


\begin{rem}
It is known that the condition $\kappa _{\sigma}(X, K_X+\Delta)=0$ 
is equivalent to the existence of good minimal models of $(X, \Delta)$ 
(see, for example, \cite{druel} and \cite{gongyo-minimal}). 
\end{rem} 

We can easily check the following statement (cf.~the proof of \cite[Theorem 5.9]{fg3}). 

\begin{thm}\label{thma2}
Conjecture \ref{conj N_sm}$_{\leq n}$ and Conjecture \ref{conjH}$_{\leq n}$ imply Conjecture \ref{conj M}$_{\leq n}$.  
\end{thm}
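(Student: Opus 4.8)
The plan is to mimic the structure of the proof of Theorem \ref{thma1}, replacing the role of Conjecture \ref{conjG} (the DLT extension conjecture) by Conjecture \ref{conjH} (abundance for klt pairs with $\kappa=0$), and feeding both into the machinery of \cite[Section 5]{fg3}. First I would argue by induction on $n$: assuming Conjecture \ref{conj M}$_{\leq n-1}$, Remark \ref{rem36} gives Conjecture \ref{conj A}$_{\leq n-1}$. Combining this with Lemma \ref{lem5} and the hypothesis Conjecture \ref{conj N_sm}$_{\leq n}$ yields Conjecture \ref{conj N}$_{\leq n}$, exactly as in the proof of Theorem \ref{thma1}.

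The second step is to reduce Conjecture \ref{conj M}$_{\leq n}$, via the minimal model program and dlt blow-ups, to the case where $(X,\Delta)$ is $\mathbb{Q}$-factorial dlt, $K_X+\Delta$ is nef, and $\kappa(X,K_X+\Delta)=0$; this reduction is precisely the content of (the proof of) \cite[Theorem 5.9]{fg3}, and uses Conjecture \ref{conj N}$_{\leq n}$ established in the first step to guarantee $K_X+\Delta\sim_{\mathbb{Q}}D\geq 0$. In this reduced situation one wants to conclude that $K_X+\Delta$ is semi-ample, i.e. that $(X,\Delta)$ has a good minimal model (itself). By the remark following Conjecture \ref{conjH}, it suffices to show $\kappa_\sigma(X,K_X+\Delta)=0$.

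The final step is where Conjecture \ref{conjH} enters. In the reduced case $\lfloor\Delta\rfloor=S$ may be nonzero, so one passes to the components of $S$ and the non-klt locus by adjunction/restriction arguments, together with the ACC-type and boundedness inputs already invoked in the paper (cf.~Lemma \ref{lem5} and \cite{hmx}), to propagate the vanishing of the numerical dimension from the klt strata, where Conjecture \ref{conjH}$_{\leq n}$ applies, up to the whole pair; concretely this is \cite[Corollary 5.10]{fg3} with Conjecture \ref{conjG} replaced by Conjecture \ref{conjH}, which is legitimate because Conjecture \ref{conjG} is only used there to control $\kappa_\sigma$ on log canonical centers and Conjecture \ref{conjH} supplies the same control on the klt part. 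I expect the main obstacle to be checking that the argument of \cite[Theorem 5.9 and Corollary 5.10]{fg3} genuinely only needs the numerical-dimension-zero statement of Conjecture \ref{conjH} rather than the full strength of the extension statement in Conjecture \ref{conjG}; once that bookkeeping is done the conclusion follows formally.

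\begin{proof}
By induction on the dimension, we may assume that Conjecture \ref{conj M}$_{\leq n-1}$ holds, hence Conjecture \ref{conj A}$_{\leq n-1}$ holds by Remark \ref{rem36}. Then Lemma \ref{lem5}, Conjecture \ref{conj N_sm}$_{\leq n}$, and Conjecture \ref{conj A}$_{\leq n-1}$ imply Conjecture \ref{conj N}$_{\leq n}$. Finally, by the argument of \cite[Theorem 5.9 and Corollary 5.10]{fg3}, in which Conjecture \ref{conjG} may be replaced by Conjecture \ref{conjH} (it is used there only to ensure that Nakayama's numerical dimension vanishes on the relevant strata when the Iitaka dimension does), Conjecture \ref{conj N}$_{\leq n}$ and Conjecture \ref{conjH}$_{\leq n}$ imply Conjecture \ref{conj M}$_{\leq n}$.
\end{proof}
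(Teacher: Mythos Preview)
Your proposal is correct and follows exactly the approach the paper indicates: the paper does not give a full proof of Theorem \ref{thma2} but simply says ``The proof of Theorem \ref{thma2} is almost the same as that of Theorem \ref{thma1}'' and leaves the details as an exercise, and your formal proof is precisely the proof of Theorem \ref{thma1} with Conjecture \ref{conjG} swapped for Conjecture \ref{conjH} at the point where \cite[Theorem 5.9 and Corollary 5.10]{fg3} is invoked. Your accompanying discussion even supplies more detail about how the substitution works inside the argument of \cite{fg3} than the paper itself offers.
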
 

We leave the details as exercises for the reader. 
The proof of Theorem \ref{thma2} is almost the same as that of Theorem \ref{thma1}. 



\begin{thebibliography}{BCHM}

\bibitem[B1]{bir-I} 
C.~Birkar, 
On existence of log minimal models, 
Compos. Math. {\textbf{146}} (2010), no. 4, 919--928.

\bibitem[B2]{bir-exiII}
C.~Birkar, 
On existence of log minimal models 
II, J. Reine Angew. Math., {\textbf{658}} (2011), 99--113.

\bibitem[B3]{bir-exilcflip}
C.~Birkar, Existence of log canonical flips and a special LMMP, 
Publ. Math. Inst. Hautes \'Etudes Sci. {\textbf{115}} (2012), 325--368. 

\bibitem[BCHM]{bchm}
C.~Birkar, P.~Cascini, C.~D.~Hacon, and J.~M{\textsuperscript{c}}Kernan, 
Existence of minimal models for varieties of log general type, 
J. Amer. Math. Soc. {\textbf{23}} (2010), 405--468.

\bibitem[BDPP]{bdpp}
S.~Boucksom, J.-P.~Demailly, 
M.~P{\u{a}}un, and T.~Peternell, The pseudo-effective cone of a 
compact K\"ahler manifold and varieties of negative Kodaira dimension, 
J. Algebraic Geom. {\textbf{21}} (2013), 201--248.

\bibitem[CL]{cl}
A.~Corti and  V.~Lazi\'c, New outlook on the Minimal Model Program, II,  Math. Ann. {\textbf{356}} (2013), no. 2, 617--633.

\bibitem[DHP]{dhp-ext}
J.-P.~Demailly, C.~D.~Hacon, 
and M.~P{\u{a}}un, Extension theorems, Non-vanishing and the existence of 
good minimal models, 
to appear in Acta Mathematica. arXiv:1012.0493v2

\bibitem[D]{druel} 
S.~Druel, Quelques remarques sur la d\'ecomposition de Zariski divisorielle sur les vari\'et\'es dont la premi\`ere classe de Chern est nulle, 
Math. Z. {\textbf{267}} (2011), no. 1-2, 413--423.

\bibitem[F1]{fujino-ab} 
O.~Fujino, Abundance theorem for semi log canonical threefolds, 
Duke Math. J. {\textbf{102}} (2000), no. 3, 513--532.

\bibitem[F2]{fujino-high} 
O.~Fujino, Higher direct images of log canonical divisors, 
J. Differential Geom. {\textbf{66}} (2004), no. 3, 453--479. 

\bibitem[F3]{fujino-what} 
O.~Fujino, What is log terminal?, 
{\em{Flips for $3$-folds and $4$-folds}}, 49--62, 
Oxford Lecture Ser. Math. Appl., {\textbf{35}}, Oxford Univ. Press, Oxford, 2007.

\bibitem[F4]{fujino-f.g.} 
O.~Fujino, Finite generation of the log canonical ring in dimension four, 
Kyoto Journal of Mathematics, Vol. {\textbf{50}}, No. 4 (2010), 671--684. 

\bibitem[F5]{fujino-funda} 
O.~Fujino, Fundamental theorems for the log minimal model program, 
Publ. Res. Inst. Math. Sci. {\textbf{47}} (2011), no. 3, 727--789.

\bibitem[F6]{fujino-surface} 
O.~Fujino, Minimal model theory for log surfaces, 
Publ. Res. Inst. Math. Sci. {\textbf{48}} (2012), no. 2, 339--371.

\bibitem[FG1]{fg3}
O.~Fujino and Y.~Gongyo, 
Log pluricanonical representations and abundance conjecture, preprint (2011), to appear in Compos. Math. 

\bibitem[FG2]{fg-moduli} 
O.~Fujino and Y.~Gongyo, On the moduli b-divisors of lc-trivial fibrations, 
preprint (2012). arXiv:1210.5052

\bibitem[FM]{fm}
O.~Fujino and S.~Mori, A canonical bundle formula,  J. Differential Geom. {\textbf{56}} 
(2000),  no. 1, 167--188.

\bibitem[Fk]{fukuda-num-eff} 
S.~Fukuda, On numerically effective log canonical divisors, 
Int. J. Math. Math. Sci. {\textbf{30}} (2002), no. 9, 521--531.

\bibitem[G1]{gongyo-minimal} 
Y.~Gongyo, On the minimal model theory for dlt pairs of numerical log Kodaira dimension zero, 
Math. Res. Lett. {\textbf{18}} (2011), no. 5, 991--1000. 

\bibitem[G2]{g4}Y.~Gongyo, Remarks on the non-vanishing 
conjecture, to appear in the proceeding of Algebraic geometry in East Asia, Taipei. 

\bibitem[HMX]{hmx}
C.~D.~Hacon, J.~M{\textsuperscript{c}}Kernan, 
and C.~Xu, ACC for log canonical thresholds,  
preprint (2012). arXiv:1208.4150.

\bibitem[HX]{hacon-xu-lc-closure}
C.~D.~Hacon and C.~Xu, 
Existence of log canonical closures, Invent. Math. {\textbf{192}} (2013), no. 1, 161--195.

\bibitem[K]{kawamata_pluri} Y.~Kawamata, 
Pluricanonical systems on minimal algebraic varieties, 
Invent. Math. {\textbf{79}} (1985), no. 3, 567--588.

\bibitem[KMM]{kamama}
Y.~Kawamata, K.~Matsuda, and K.~Matsuki, 
Introduction to the minimal model problem, 
{\em{Algebraic geometry, Sendai, 1985}}, 283--360, 
Adv. Stud. Pure Math., {\textbf{10}}, North-Holland, Amsterdam, 1987.

\bibitem[KM]{komo}
J. Koll\'ar and S. Mori, 
{\textit{Birational geometry of algebraic varieties}}, 
With the collaboration of C.~H.~Clemens and A.~Corti. Translated from the 1998 
Japanese original. Cambridge Tracts in Mathematics, {\textbf{134}}. Cambridge 
University Press, Cambridge, 1998. 

\bibitem[L]{lazic}
V.~Lazi\'c, Adjoint rings are finitely generated, preprint (2009). 
arXiv:0905.2707
\end{thebibliography}
\end{document}